\theoremstyle{plain} 
\newtheorem{thm}{Theorem}[section]
\newtheorem{prop}[thm]{Proposition}
\newtheorem{lem}[thm]{Lemma}
\newtheorem{cor}[thm]{Corollary}
\newtheorem{conj}[thm]{Conjecture}
\theoremstyle{definition}
\newtheorem{remark}[thm]{Remark}
\newtheorem*{hyp}{Hypothesis}
\newtheoremstyle{TheoremNum}
        {\topsep}{\topsep}             
        {\itshape}                      
        {}                             
        {\bfseries}                    
        {.}                            
        { }                          
        {\thmname{#1}\thmnote{ \bfseries #3}}
\theoremstyle{TheoremNum}
\numberwithin{equation}{section}
\renewcommand{\bold}{\boldsymbol}
\newcommand{\mf}{\mathfrak}
\newcommand{\mc}{\mathcal}
\newcommand{\mb}{\mathbb}
\newcommand{\ovl}{\overline}
\newcommand{\xto}{\xrightarrow}
\newcommand{\F}{\Phi}
\newcommand{\Q}{\mb{Q}}
\newcommand{\Z}{\mb{Z}}
\renewcommand{\O}{\mc{O}}
\DeclareMathOperator{\Gal}{Gal}
\DeclareMathOperator{\Sel}{Sel}
\DeclareMathOperator{\rank}{rank}
\DeclareMathOperator{\ord}{ord}
\DeclareMathOperator{\Hom}{Hom}
\DeclareFontFamily{U}{wncy}{}
\DeclareFontShape{U}{wncy}{m}{n}{<->wncyr10}{}
\DeclareSymbolFont{mcy}{U}{wncy}{m}{n}
\DeclareMathSymbol{\Sha}{\mathord}{mcy}{"58}
\title[Central $L$-values and the growth of {$\Sha[3]$}]{On central $L$-values and the growth of the $3$-part of the Tate--Shafarevich group}
\author{Yukako Kezuka}
\address{Institut de Math\'{e}matiques de Jussieu - Paris Rive Gauche, Sorbonne Universit\'{e}\\
4 Pl. Jussieu\\
75005 Paris\\
France} 
\email{yukako.kezuka@imj-prg.fr}
\begin{document}

\begin{abstract} 
Given any cube-free integer $\lambda>0$, we study the $3$-adic valuation of the algebraic part of the central $L$-value of the elliptic curve
$$X^3+Y^3=\lambda Z^3.$$
We give a lower bound in terms of the number of distinct prime factors of $\lambda$, which, in the case $3$ divides $\lambda$, also depends on the power of $3$ in $\lambda$. This extends an earlier result of the author in which it was assumed that $3$ is coprime to $\lambda$. We also study the $3$-part of the Tate--Shafarevich group for these curves and show that the lower bound is as expected from the conjecture of Birch and Swinnerton-Dyer, taking into account also the growth of the Tate--Shafarevich group.
\end{abstract}

\maketitle

\section{Introduction and main result}

Let $\lambda$ be a non-zero integer. We define the curve $C_\lambda/\Q$ by the equation
$$C_\lambda: X^3+Y^3=\lambda Z^3.$$
This is an elliptic curve, and for a cube-free $\lambda$, $C_\lambda$ is a twist of $C_1$ by the cubic extension $\Q(\sqrt[3]{\lambda})/\Q$ obtained by adjoining the real cube root of $\lambda$ to $\Q$. It has complex multiplication by the ring of integers $\O$ of the imaginary quadratic field $K=\Q(\sqrt{-3})$.
A classical Weierstrass equation for $C_\lambda$ is given by
$$E_\lambda: y^2z=4x^3-3^3\lambda^2z^3,$$
which is obtained by the change of variables $x= 3\lambda Z$, $y=3^2\lambda(Y-X)$, $z=X+Y$. This family of elliptic curves has been studied by a great many authors, including Selmer \cite{selmer}, Cassels \cite{cassels}, Rodriguez Villegas and Zagier \cite{RZ}. The study of these equations comes down to the classical question in Diophantine analysis: which integers $\lambda$ are sums of two rational cubes? Indeed, in the case $\lambda=p$ is a prime number, there is a conjecture of from 1879 by Sylvester \cite{sylvester} which says that if $p$ is congruent to $4,7$ or $8$ modulo $9$, then $p$ is a sum of two rational cubes.

We write $\Omega_\lambda$ for the least positive real period of the N\'{e}ron differential of a minimal generalised Weierstrass equation for $C_\lambda$. Then we define the algebraic part of $L(C_\lambda,1)$ to be
$$L^{\mathrm{(alg)}}(C_\lambda,1)=\frac{L(C_\lambda,1)}{\Omega_\lambda}.$$

In \cite{yukatam}, we studied the $3$-adic valuation of $L^{\mathrm{(alg)}}(C_\lambda,1)$ in the case when $(\lambda,3)=1$. We crucially used that $3\nmid \lambda$, because $3$ divides the conductor of the base curve $E_1$ which is $(27)$. The aim of this paper is to extend the lower bounds for the $3$-adic valuation of $L^{\mathrm{(alg)}}(C_\lambda,1)$ obtained in \cite{yukatam} to the case when $3\mid \lambda$. This is achieved by working with the Weierstrass $\wp$-function whose real period $\Omega$ is given by $\frac{\Omega}{\sqrt{3}}=\Omega_1$, satisfying the classical Weierstrass equation
$$\wp'^2(u,\mc{L})=4 \wp^3(u,\mc{L})-1$$
where $\mc{L}=\Omega \O$ denotes the period lattice. We explicitly compute the $3$-adic valuation of expressions involving $\wp(u,\mc{L})$ following the method of Stephens \cite{stephens}, where it was used to obtain integrality of $L^{(\mathrm{alg})}(C_\lambda,1)$, even when $3\mid \lambda$. Stephen's method, in turn, is based on the idea of Birch and Swinnerton-Dyer used in their proof of integrality of the algebraic part of the central $L$-value  of quadratic twists of the curve $y^2=4x^3-4x$ in \cite{BSD}. In this paper, we show in addition to integrality of $L^{\mathrm{(alg)}}(C_\lambda,1)$ that it is also divisible by an arbitrarily large power of $3$ as one increases the number of distinct prime divisors of $\lambda$. In order to do this, we introduce an ``averaged  $L$-value'' $\Phi_\lambda$, summing over the $L$-values of all cubic twists of $C_\lambda$ with the same (sub)set of prime divisors, while fixing the exponent $e(\lambda)$ of $3$ dividing $\lambda$. It is important that $e(\lambda)$ is fixed, as it turns out that the $3$-adic valuation of $L^{(\mathrm{alg})}(C_\lambda,1)$ depends on $e(\lambda)$ (see Theorem \ref{main3}).

From now on, we write $\lambda=3^{e(\lambda)}D$ with $(3,D)=1$. We may assume that $\lambda$ is a cube-free positive integer so that $e(\lambda)\in \{1,2\}$, since the case $3\nmid \lambda$ was treated in \cite{yukatam}. Let $p_1,\ldots , p_{n}$ be the distinct prime divisors dividing $\lambda$, and without loss of generality we set $p_1=3$. Given $\alpha=(\alpha_2,\ldots , \alpha_n)\in \Z^{n-1}$, we define
$$D_\alpha=p_2^{\alpha_2}\cdots p_n^{\alpha_n}$$
and
$$\lambda_\alpha=3^{e(\lambda)}D_\alpha.$$
Given $\alpha, \beta \in \Z^{n-1}$, the resulting curves $C_{\lambda_\alpha}$ and $C_{\lambda_{\beta}}$ are isomorphic over $\Q$ if $\alpha_i\equiv \beta_i \bmod 3$ for all $i\in \{2,\ldots n\}$. Thus, we may treat $\alpha$ as an element of $(\Z/3\Z)^{n-1}$.

Suppose $L^{(\mathrm{alg})}(C_\lambda,1)\neq 0$. Then the conjecture of Birch and Swinnerton-Dyer predicts:
\begin{conj}\label{BSD} Let $\lambda$ be a non-zero, cube-free integer such that $L^{(\mathrm{alg})}(C_\lambda,1)\neq 0$. Then
$$L^{(\mathrm{alg})}(C_\lambda,1)=\frac{\#(\Sha(C_\lambda/\Q))\prod_{q} c_q}{\#(C_\lambda(\Q)_{\mathrm{tor}})^2},$$
where $\Sha(C_\lambda/\Q)$ is the Tate--Shafarevich group of $C_\lambda/\Q$, $c_q$ denotes the Tamagawa factor of $C_\lambda$ at the prime $q$, and $C_\lambda(\Q)_{\mathrm{tor}}$ denotes the torsion part of the Mordell--Weil group $C_\lambda(\Q)$.
\end{conj}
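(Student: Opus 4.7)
The strategy is to prove the claimed equality prime-by-prime, exploiting the complex multiplication of $C_\lambda$ by $\O = \O_K$ with $K = \Q(\sqrt{-3})$. Under the hypothesis $L^{(\mathrm{alg})}(C_\lambda, 1) \neq 0$, the theorem of Coates--Wiles extended by Rubin yields $\rank_\Z C_\lambda(\Q) = 0$ and finiteness of $\Sha(C_\lambda/\Q)$, so the right-hand side is a well-defined positive rational number. The $L$-function factorises as $L(C_\lambda, s) = L(\psi_\lambda, s) L(\bar\psi_\lambda, s)$ for a Hecke character $\psi_\lambda$ of $K$, placing the problem in the realm of Iwasawa theory over the imaginary quadratic field $K$, where substantial tools are available for CM elliptic curves.

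For each rational prime $p \neq 2,3$ of good reduction split in $K$, the $p$-part of the conjectured equality follows from Rubin's proof of the two-variable main conjecture of Iwasawa theory for $K$: the Coates--Wiles $p$-adic $L$-function agrees, up to a $p$-adic unit, with the characteristic ideal of the compact Selmer group of $\psi_\lambda$ over the anticyclotomic $\Z_p$-extension, and evaluating at the trivial character yields the $p$-part of the BSD formula after matching Euler factors and periods. For primes $p$ inert in $K$ one combines this with a plus/minus main conjecture along the lines of Agboola--Howard and Burungale--Kim; the $2$-part can be treated by explicit $2$-descent on the family $C_\lambda$, which is classical since Selmer.

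The heart of the matter is the $3$-part, which is where the present paper's methods enter. Because $3$ ramifies in $K$ and divides the conductor of $C_1$, neither Coates--Wiles nor the standard main conjecture applies directly. The plan is to (i) sharpen the lower bound on $\ord_3 L^{(\mathrm{alg})}(C_\lambda, 1)$ of Theorem \ref{main3}, proved here via explicit $\wp$-function values, to an equality that separates the contribution of the Tamagawa factor $c_3$ (which depends on $e(\lambda) \in \{1,2\}$) from that of the $3$-part of $\Sha$; and (ii) compute $\ord_3 \#\Sha(C_\lambda/\Q)[3^\infty]$ exactly via a full descent along the $3$-isogenies of $C_\lambda$ together with its cubic twists, matching the averaged quantity $\Phi_\lambda$ introduced in the paper. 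The main obstacle is step (ii): converting the $\Sha[3]$ lower bound produced by these methods into the exact order of $\Sha[3^\infty]$ requires either an Euler system argument of Kolyvagin--Rubin type adapted to the ramified CM prime $3$, or a higher $3$-descent controlling the vanishing of the $3$-Cassels--Tate pairing — each of which represents a substantial open problem and is what keeps the stated equality a conjecture rather than a theorem.
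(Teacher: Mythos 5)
The statement you are trying to prove is Conjecture \ref{BSD}, i.e.\ the exact Birch--Swinnerton-Dyer formula for $C_\lambda$. The paper does not prove it --- it states it as a conjecture, proves the \emph{inequality} it predicts for the $3$-adic valuation (Theorems \ref{main} and \ref{main3}), and records in a remark that the $p$-part for $p\neq 2,3$ is Rubin's theorem. No proof of the full statement exists, so any ``proof'' of it must contain a gap, and yours does: your own step (ii) --- converting descent/averaging information into the exact order of $\Sha(C_\lambda/\Q)[3^\infty]$ --- is explicitly conceded to be ``a substantial open problem.'' A proof that defers its decisive step to an unproved Euler-system or higher-descent argument is not a proof; at best you have written a research program, and the honest conclusion of your last paragraph is that the statement remains a conjecture, which is exactly the paper's position.

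Beyond this structural point there are concrete errors in the parts you present as established. First, the factorisation $L(C_\lambda,s)=L(\psi_\lambda,s)L(\ovl{\psi}_\lambda,s)$ is the $L$-function of $C_\lambda$ \emph{base-changed to} $K$; over $\Q$, Deuring's theorem (as used in the paper) gives $L(C_\lambda,s)=L(\ovl{\psi}_\lambda,s)$, a single Hecke $L$-function. Second, your claim that ``the $2$-part can be treated by explicit $2$-descent \ldots classical since Selmer'' is false: descent computes Selmer groups and hence bounds $\Sha[2]$, but the $2$-part of the BSD formula requires the exact order of $\Sha(C_\lambda/\Q)[2^\infty]$, which is not known for this family (this is precisely why Rubin's result, and the paper's remark, exclude $p=2$ as well as $p=3$). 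Third, even for good primes $p\nmid 6$, the route through anticyclotomic main conjectures and plus/minus theory at inert primes is not how Rubin's theorem is usually invoked and would itself need careful justification; the clean statement you should cite is simply Rubin's two-variable main conjecture giving the $p$-part of the formula for $p\neq 2,3$, as the paper does.
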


\begin{remark}\label{remarkGZK}
Since $C_\lambda$ admits complex multiplication by $K$ with class number $1$, a theorem of Coates and Wiles \cite{CW} shows that if $L(C_\lambda,1)\neq 0$ then $C_\lambda(\Q)$ is a finite group. Shimura proved in \cite{Shimura} that any elliptic curve $E/\Q$ with complex multiplication is modular. More generally, given any elliptic curve $E/\Q$, we know that $E$ is modular by the work of Wiles \cite{wil}, Taylor--Wiles \cite{TW} and Breuil--Conrad--Diamond--Taylor \cite{BCDT}. Thus, by a theorem of Gross, Zagier \cite{GZ} and Kolyvagin \cite{kol}, we know that if the analytic rank $\ord_{s=1}L(E,s)$ is less than or equal to $1$, then it is equal to the algebraic rank. In addition, Kolyvagin showed that $\Sha(E/\Q)$ is finite in this case. 
\end{remark}
\begin{remark}
When $\lambda=2$, we have
$C_\lambda(\Q)_{\mathrm{tor}}=\Z/2\Z$ (note that in \cite[X.10.19 (d)]{sil} there is a misprint that $C_\lambda(\Q)_{\mathrm{tor}}=\Z/3\Z$). Otherwise, we have $C_\lambda(\Q)_{\mathrm{tor}}$ is trivial for a cube-free $\lambda$. 
\end{remark}
\begin{remark}The $p$-part of Conjecture \ref{BSD} was obtained by Rubin \cite{rubin} using Iwasawa theory for $p\neq 2$ or $3$. In this paper, we contribute to the case $p=3$.
\end{remark}
Let $n=n(\lambda)$ be the number of distinct prime divisors of $\lambda$, and let $s(\lambda)$ (resp. $r(\lambda)$) denote the number of distinct prime divisors of $\lambda$ which are split (resp. inert) in $K$. Since we assume $3\mid \lambda$ and $3$ is the only prime which ramifies in $K$, we have
$$n(\lambda)=s(\lambda)+r(\lambda)+1.$$
A computation of Tamagawa factors using Tate's algorithm \cite{tate} gives $c_q=1$ if $q$ is a prime of good reduction,
$$
c_3= \left\{ \begin{array}{ll}
2 &\mbox{if $\lambda\equiv 2, 7 \bmod 9$} \\
3&\mbox{if $\lambda \equiv 1, 8 \bmod 9$} \\
1  & \mbox{$\lambda \equiv 0, 3, 4,  5, 6 \bmod 9$,}
       \end{array} \right.
$$
and for a prime $q$ of bad reduction,
$$
c_q= \left\{ \begin{array}{ll}
3 &\mbox{if $q\equiv 1 \bmod 3$} \\
1&\mbox{if $q \equiv 2 \bmod 3$.}
       \end{array} \right.
$$
In particular, if $3\mid \lambda$, we have
$$\prod_q c_q=3^{s(\lambda)}.$$
\begin{remark} In \cite[Lemma 3]{stephens} it is stated that $c_3=3$ for $\lambda\equiv 0\bmod 9$. This should be corrected to $c_3=1$.
\end{remark}
Satg\'{e}  gave a $3$-descent argument on $C_\lambda$ by identifying an element in the Selmer group of a $3$-isogeny to an element of $\Hom(\Gal(\ovl{\Q}/K),\Z/3\Z)$ whose kernel fixes the Galois closure $N$ of a pure cubic field of the form $\Q(\sqrt[3]{D_\alpha})$, satisfying certain local conditions. In particular, $N$ completed at a prime $p\neq 3$ dividing $\lambda$ must be contained in $\Q_p(\sqrt{-3},\sqrt[3]{\lambda^2})$ (see \cite[Proposition 1.7, Th\'{e}or\`{e}me 1.14]{satge}). This is satisfied automatically in the case $p$ is inert in $K$. This was used to show that for any cube-free positive integer $\lambda$, we have (see \cite[Theorem 3.1]{yukatam}):
\begin{equation}\label{Sha3}\ord_3\left(\#\Sha(C_\lambda)\right)\geq r(\lambda)-t(\lambda)-1-\rank(C_\lambda),
\end{equation}
where $\ord_3$ denotes the normalised $3$-adic valution such that $\ord_3(3)=1$ and $t(\lambda)=1$ if $\lambda\equiv \pm 1 \bmod 9$, $t(\lambda)=-1$ if $\ord_3(\lambda)=1$ and $t(\lambda)=0$ otherwise. In Theorem \ref{3descent}, we improve this result and show that equality holds in certain special cases. Furthermore, it was already known by Cassels that $\Sha(C_N)$ can be arbitrarily large. Indeed, by directly working with principal homogeneous spaces and choosing $\lambda$ so that these equations are everywhere locally solvable but not globally (and thus, by definition, an element of $\Sha(C_\lambda)$), he was able to show that given any positive integer $n$, there exists $\lambda$ with $n+3^n$ distinct prime factors, all congruent to $8$ modulo $9$, such that $\ord_3\left(\#(\Sha(C_\lambda))\right)\geq n$. Unfortunately, there is no direct way of choosing such a $\lambda$. Theorem \ref{3descent} (3) provides another perspective and shows that in this case we have an equality:
$$\ord_3(\#(\Sha(C_\lambda))= n+3^n-\rank(C_\lambda).$$

Note that when $\ord_{s=1}(L(C_\lambda,s))\leq 1$, we know by Remark \ref{remarkGZK} that analytic and the algebraic ranks coincide and that $\Sha(C_\lambda/\Q)$ is finite. In general, $\Sha(C_\lambda/\Q)$ is conjectured to be finite, and when it is, we know by a theorem of Cassels \cite{cassels62} that its order is a perfect square. We check that the global root number $\epsilon(C_\lambda/\Q)$ has the same parity as $r(\lambda)-t(\lambda)-1$ (see Proposition \ref{proproot}), which is consistent with the fact that we expect the left hand side of \eqref{Sha3} to be an even integer. Now, in the case $e(\lambda)\in \{1,2\}$, we have $e(\lambda)=t(\lambda)+2$ and thus we expect from the exact formula of the Birch--Swinnerton-Dyer conjecture:
\begin{align}\label{eqmain3}
\nonumber \ord_3\left(L^{(\mathrm{alg})}(C_\lambda,1)\right)&\geq s(\lambda)+r(\lambda)-e(\lambda)+1\\
&=n(\lambda)-e(\lambda)
\end{align}
where the inequality holds automatically when $L(C_\lambda,1)=0$, since in this case $\ord_3\left(L^{(\mathrm{alg})}(C_\lambda,1)\right)=\infty$. The main result of the paper is that this inequality \eqref{eqmain3} predicted by the Birch--Swinnerton-Dyer conjecture indeed holds (see Theorem \ref{main3}). Thus, combined with \cite[Theorem 1.3]{yukatam}, we obtain:
\begin{thm}\label{main} Let $\lambda>1$ be any cube-free integer. Then
$$
\ord_3(L^{(\mathrm{alg})}(C_\lambda,1))\geq \left\{ \begin{array}{ll}
n(\lambda)&\mbox{if the prime factors of $\lambda$ split in $K$} \\
n(\lambda)-2&\mbox{if $9\mid \lambda$} \\
n(\lambda)-1 & \mbox{otherwise.}
       \end{array} \right.
$$
\end{thm}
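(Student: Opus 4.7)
The plan is to prove Theorem \ref{main} by combining two inputs: for the case $3 \nmid \lambda$, we invoke the earlier result \cite[Theorem 1.3]{yukatam}; for the case $3 \mid \lambda$, we will establish the new Theorem \ref{main3} of the present paper, which yields $\ord_3(L^{(\mathrm{alg})}(C_\lambda, 1)) \geq n(\lambda) - e(\lambda)$.

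To match the three cases of the statement, I would split according to $e(\lambda) \in \{0, 1, 2\}$. If $e(\lambda) = 0$, then \cite[Theorem 1.3]{yukatam} gives $n(\lambda)$ when all prime factors are split (i.e.\ $r(\lambda) = 0$) and $n(\lambda) - 1$ otherwise; this covers the first and third bullets whenever $3 \nmid \lambda$. If $e(\lambda) = 1$, Theorem \ref{main3} gives $n(\lambda) - 1$, matching the third bullet. If $e(\lambda) = 2$, it gives $n(\lambda) - 2$, matching the middle bullet. The ``all primes split'' bullet never applies when $3 \mid \lambda$, since $3$ ramifies in $K = \Q(\sqrt{-3})$.

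The substantive content is entirely in Theorem \ref{main3}. I would prove it by adapting the method of Stephens, working with the non-minimal Weierstrass equation $\wp'^2 = 4\wp^3 - 1$ whose period lattice $\mc{L} = \Omega \O$ has real period $\Omega$ satisfying $\Omega_1 = \Omega/\sqrt{3}$. The core step is to express $L^{(\mathrm{alg})}(C_\lambda, 1)$ as a finite sum of values of $\wp(u, \mc{L})$ at torsion points attached to the conductor of $C_\lambda$. To pick up a high power of $3$, I would introduce the averaged $L$-value
\[
\Phi_\lambda = \sum_\alpha L^{(\mathrm{alg})}(C_{\lambda_\alpha}, 1),
\]
with $\alpha$ running over $(\Z/3\Z)^{n-1}$, i.e.\ over all cubic twists with the same set of prime divisors, keeping $e(\lambda)$ fixed. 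Using orthogonality of cubic characters at each prime $p_i$, each summand of $\Phi_\lambda$ acquires factors of $3$ for every prime in the support; an induction on the subsets of $\{p_2,\ldots,p_n\}$ (equivalently, M\"obius inversion) then transfers this uniform divisibility back to each individual $L^{(\mathrm{alg})}(C_\lambda, 1)$.

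The main obstacle is the $3$-adic analysis of $\wp$-values in the presence of $3 \mid \lambda$. Since $3$ divides the conductor of the base curve $E_1$, the clean Birch--Swinnerton-Dyer-style manipulation available when $3 \nmid \lambda$ breaks down, and one has to distinguish carefully between the cases $e(\lambda) = 1$ and $e(\lambda) = 2$: the Tamagawa and local data recorded above dictate different congruence behaviour of $\wp$-values, matching the shift from $n(\lambda) - 1$ to $n(\lambda) - 2$. Fixing $e(\lambda)$ inside the definition of $\Phi_\lambda$ is precisely what makes this $3$-adic bookkeeping tractable and yields the sharp lower bound predicted by \eqref{eqmain3}.
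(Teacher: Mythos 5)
Your proposal is correct and follows essentially the same route as the paper: Theorem \ref{main} is deduced exactly as you describe, by combining \cite[Theorem 1.3]{yukatam} for $e(\lambda)=0$ with Theorem \ref{main3} for $e(\lambda)\in\{1,2\}$, and the paper proves Theorem \ref{main3} by the same Stephens-style averaging over cubic twists with $e(\lambda)$ fixed, orthogonality of cubic characters, and induction on the number of prime divisors. The only minor differences are that the paper's $\Phi_\lambda$ averages the imprimitive values $L_S(\ovl{\psi}_{\lambda_\alpha},1)/\Omega$ normalised by the single period $\Omega$ rather than the algebraic parts $L^{(\mathrm{alg})}(C_{\lambda_\alpha},1)$ (whose periods vary with $\alpha$), and the base cases $n(\lambda)=2$ also use the root number computation and Stephens' integrality of $L^{(\mathrm{alg})}(C_\lambda,1)$ to round the fractional bounds up to integers.
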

Thus, the $3$-adic valuation of $\ord_3(L^{(\mathrm{alg})}(C_\lambda,1))$ grows with the number of distinct prime divisors of $\lambda$. Furthermore, the exact formula of Birch and Swinnerton-Dyer conjecture suggests that the growth of $\ord_3(L^{(\mathrm{alg})}(C_\lambda,1))$ with $s(\lambda)$ comes from the growth of $\ord_3\left(\prod_q c_q\right)$, and that the growth of $\ord_3(L^{(\mathrm{alg})}(C_\lambda,1))$ with $r(\lambda)$ comes from the growth of $\Sha(C_\lambda)[3]$. In particular, we note that Theorem \ref{main} is strictly stronger than the bound we get from Conjecture \ref{BSD} assuming that $\#(\Sha(C_\lambda))$ is an integer, as it also reflects the fact that $\Sha(C_\lambda)[3]$ grows with the number of distinct prime divisors of $\lambda$ which are inert in $K$.

The numerical examples listed in this paper were obtained using PARI/GP \cite{pari} and Magma \cite{magma}.

\section{Averaging of the algebraic part of the central $L$-values}

For ease of computation, we let $\Omega=3.059908...$ denote the real period of the classical Weierstrass equation $y^2=4x^3-1$, and set $\mc{L}=\Omega\O$. Let $\Omega_1=1.76663875...$  denote the real period of the minimal model $y^2+y=x^3-7$ of $E_1$. Then $\Omega_1=\frac{\Omega}{\sqrt{3}}$. In general, we set $\Omega_\lambda=\frac{\Omega}{\sqrt{3}{\sqrt[3]{\lambda}}}$ when $\ord_3(\lambda)\leq 1$ and $\Omega_\lambda=\frac{3\Omega}{\sqrt{3}\sqrt[3]{\lambda}}=\frac{\sqrt{3}\Omega}{\sqrt[3]{\lambda}}$ when $\ord_3(\lambda)=2$, and we write $\mc{L}_\lambda=\Omega_\lambda\O$. Then the algebraic part of $L(C_\lambda,1)$ is given by
$$L^{\mathrm{(alg)}}(C_\lambda,1)=\frac{L(C_\lambda,1)}{\Omega_\lambda}.$$

Let $\psi_\lambda$ denote the Grossencharacter of $C_\lambda/K$, and let $\ovl{\psi}_\lambda$ denote its complex conjugate. Then by a theorem of Deuring (see, for example, \cite[Theorem 45]{coates1}), we have
$$L(C_\lambda,s)=L(\ovl{\psi}_\lambda,s).$$
Thus,
$$
\ord_3\left(\frac{L(\ovl{\psi}_{\lambda},1)}{\Omega}\right)= \left\{ \begin{array}{ll}
\ord_3\left(L^{(\mathrm{alg})}(C_\lambda,1)\right)+\ord_3\left(\frac{1}{\sqrt{3}\sqrt[3]{\lambda}}\right)  &\mbox{if $e(\lambda)=1$} \\
\ord_3\left(L^{(\mathrm{alg})}(C_\lambda,1)\right)+\ord_3\left(\frac{\sqrt{3}}{\sqrt[3]{\lambda}}\right)  & \mbox{if $e(\lambda)=2$,}
       \end{array} \right.
$$
so that
       \begin{align}\label{phialg}
\ord_3\left(\frac{L(\ovl{\psi}_{\lambda},1)}{\Omega}\right)= \left\{ \begin{array}{ll}
\ord_3\left(L^{(\mathrm{alg})}(C_\lambda,1)\right) -\frac{5}{6} &\mbox{if $e(\lambda)=1$} \\
\ord_3\left(L^{(\mathrm{alg})}(C_\lambda,1)\right) -\frac{1}{6}  & \mbox{if $e(\lambda)=2$.}
       \end{array} \right.
\end{align}

Let $\mf{f}$ denote the conductor of $\psi_{\lambda}$ where $\ord_3(\lambda)=1$ or $2$, and recall that $\lambda=3^{e(\lambda)}D$ so that $(3,D)=1$. Then by Tate's algorithm \cite{tate} and the relation

$$C(E_\lambda/\Q)=\mathrm{N}_{K/\Q}\mf{f}\cdot d_K$$
where $C(E_\lambda/\Q)$ denotes the conductor of $E_\lambda$ over $\Q$ and $d_K$ denotes the absolute value of the discriminant of $K/\Q$, we have
$$\mf{f}=9\Delta \O,$$
where $\Delta=\epsilon\cdot \mathrm{rad}(D)$, $\epsilon\in \{\pm 1\}$ is chosen so that $\Delta\equiv 1\bmod 3$ and $\mathrm{rad}(D)$ is the product of distinct rational primes dividing $D$. When $\Delta=-2$, we can compute that for any $e(\lambda)\in \{1,2\}$ and $\alpha_2\in \Z/3\Z$, we have $L^{\mathrm{alg}}(C_3,1)=L^{\mathrm{alg}}(C_{3^2 2},1)=L^{\mathrm{alg}}(C_{3^22^2},1)=1$ and $L^{\mathrm{alg}}(C_{3^{e(\lambda)2^{\alpha_2}}},1)=0$ otherwise. Thus, we may assume from now on that $|\Delta|>2$. 

For any prime $\mf{p}$ of $K$ coprime to $\mf{f}$, we have (see \cite[Chapter II, Example 10.6]{silverman})
$$\psi_{\lambda}(\mf{p})=\ovl{\left(\frac{\lambda}{\mf{p}}\right)}_3 \pi,$$
where $\pi$ is a generator of $\mf{p}$ such that $\pi\equiv 1\bmod 3$.

Let $\mc{B}$ be a set of integral ideals of $K$ prime to $\mf{f}$ such that the Artin symbols $\sigma_\mf{b}$ of $\mf{b}$ in $K(\mf{f})/K$ give precisely the Galois group $\Gal(K(\mf{f})/K)$ as $\mf{b}$ runs through the elements in $\mc{B}$. Recall that $n=n(\lambda)$ is the number of distinct prime divisors $p_1=3, p_2,\ldots , p_n$ of $\lambda$, and set $S=\{p_1,\ldots , p_n\}$. Given $\alpha=(\alpha_2,\ldots ,\alpha_{n})\in (\Z/3\Z)^{n-1}$, we set
$$D_\alpha=p_2^{\alpha_2}\cdots p_n^{\alpha_n},$$ 
and put
$$\lambda_\alpha=3^{e(\lambda)}D_\alpha.$$ 
For any $\lambda_\alpha\mid \lambda$, we write $\mf{f}_\alpha$ for the conductor of the Grossencharacter $\psi_{\lambda_\alpha}$ associated to $C_{\lambda_\alpha}/K$. Then $\mf{f}_\alpha$ divides $\mf{f}$, and thus we have $K(C_{\lambda_\alpha,\mf{f}})=K(\mf{f})$. 
Then the (usually) imprimitive $L$-function is given by
\begin{align}\label{imprimitive}L_S(\ovl{\psi}_{\lambda_\alpha},1)&=\prod_{v\nmid \mf{f}}\left(1-\frac{\ovl{\psi}_{\lambda_\alpha}(v)}{(\mathrm{N}v)^s}\right)^{-1}\\
&=\sum_{(\mf{a},\mf{f})=1}\frac{\ovl{\psi}_{\lambda_\alpha}(\mf{a})}{(\mathrm{N}\mf{a})^s}\\
&=\sum_{(\mf{a},\mf{f})=1}\frac{\left(\frac{\lambda_\alpha}{\mf{a}}\right)_3\ovl{\alpha}}{(\mathrm{N}\mf{a})^s},
\end{align}
where the sum runs over integral ideals of $\O$ prime to $\mf{f}$, and $\alpha$ is a generator of $\mf{a}$ congruent to $1$ modulo $3$.
We will often omit the $S$ from the notation whenever the $L$-function is primitive, that is, the primes dividing $\mf{f}_\alpha$ coincide with those dividing $\mf{f}$. Note that we have $3\mid \mf{f}_\alpha$ for any $\alpha\in (\Z/3\Z)^{n-1}$. By class field theory, the Artin map gives an isomorphism
$$\left(\O/9\Delta\right)^\times/\widetilde{\bold{\mu}}_6\xto{\sim}\Gal(K(\mf{f})/K).$$
Note that $|\O/9\O|^\times=2 \cdot 3^3$ and $|\left(\O/9\O\right)/\widetilde{\bold{\mu}}_6|^\times=9$. 
We may take a set of representatives $\tilde{\mc{C}}$ of $(\O/\Delta\O)^\times$ such that $c\in \tilde{\mc{C}}$ implies $-c\in \tilde{\mc{C}}$. This is clearly possible in the case where $\Delta$ is odd. If $\Delta$ is even, it is of the form $\Delta=2\Delta'$ where $\Delta'$ is odd. Thus if $c\equiv -c\bmod 2\Delta'$ for $c\in \left(\O/2\Delta ' \O\right)^\times$, it would imply $c\equiv 0\bmod \Delta'$, which is absurd. We can define a set of integral ideals of $K$ by
$$\mc{B}=\{\mf{b}=(9c+\Delta): c \in \mc{C}\},$$
where $\mc{C}$ consists of half of the set of representatives $\tilde{\mc{C}}$ of $(\O/\Delta\O)^\times$ such that $c\in \mc{C}$ implies $-c\not\in \mc{C}$.  Note that $9c+\Delta\equiv 1\bmod 3$, so that 
$$\left(\frac{p}{\mf{b}}\right)_3=\left(\frac{9c}{p}\right)_3=\left(\frac{c}{p}\right)_3$$
for any $\mf{b}=(9c+\Delta)\in \mc{B}$ and any prime divisor $p$ of $D$.

Thus, when $e(\lambda)=1$, given any $\alpha\in (\Z/3\Z)^{n-1}$, we obtain from \cite[(3.8), Lemma 3]{stephens} an expression of the form
\begin{align}\label{K1}\frac{L_S(\psi_{\lambda_\alpha},1)}{\Omega}=\frac{1}{3^\frac{5}{6}\Delta}\sum_{\mf{b}\in \mc{B}}\left(\frac{D_\alpha}{\mf{b}}\right)_3 E_1(c)
\end{align}
where $\mf{b}=(9c+\Delta)$ and $E_1(c)$ is expressed in terms of $\wp(\frac{9c\Omega}{\Delta},\mc{L})$. In order to study the $3$-adic valuation of $E_1(c)$, Stephens uses the following lemma (see \cite[Lemma 1, Lemma 2]{stephens}): 

\begin{lem}\label{ste} Let $r>0$ be a rational integer, and let $\beta$  and $\gamma$ be elements of $\O$ such that $\beta$ is prime to $\sqrt{-3}$ and $\gamma$  is prime to $\Delta$. Define $s=\frac{1}{2}(1-3^{1-r})$ and
$$
\varphi(\Delta)= \left\{ \begin{array}{ll}
\Delta^{2(\Delta^2-1)}&\mbox{if $\Delta$ is prime} \\
1   & \mbox{otherwise.}
       \end{array} \right.
$$
Then $\varphi(\Delta)\wp\left(\frac{\gamma\Omega}{\Delta}, \mc{L}\right)$ and $3^{-s}\varphi(\Delta)\left(\wp\left(\frac{\delta\Omega}{(\sqrt{-3})^r},\mc{L}\right)-\wp\left(\frac{\gamma\Omega}{\Delta},\mc{L}\right)\right)$ are algebraic units.
\end{lem}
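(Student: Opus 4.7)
The strategy is to analyze division values of the Weierstrass $\wp$- and $\sigma$-functions on the CM curve $E: y^2=4x^3-1$ attached to the lattice $\mc{L}=\Omega\O$. Since $(\gamma,\Delta)=1$ and $(\beta,\sqrt{-3})=1$, the points $P := \gamma\Omega/\Delta$ and $Q := \beta\Omega/(\sqrt{-3})^r$ modulo $\mc{L}$ are torsion points of exact order $\Delta$ and $(\sqrt{-3})^r$ respectively on $E$. The only rational prime of bad reduction for $E$ is $3$, and it is ramified in $K$ with $(3)=(\sqrt{-3})^2$.

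For the first statement, I would verify the unit property locally at each rational prime. At primes $\mf{p}\nmid 3\Delta$, the good reduction of $E$ ensures $E[\Delta]$ injects into $\widetilde{E}(\ovl{\mb{F}}_\mf{p})$, so $\wp(P)$ is a non-zero algebraic integer locally. At primes above $3$ (and not dividing $\Delta$), the formal group of $E$ at $\sqrt{-3}$ carries no nontrivial $\Delta$-torsion, so the same reduction argument yields that $\wp(P)$ is a local unit. The delicate case is primes above a prime divisor $\ell$ of $\Delta$. If $\Delta$ factors nontrivially into coprime parts, one decomposes $E[\Delta]$ via the Chinese remainder theorem and applies the addition formula for $\wp$ to reduce to the prime-power case, where one checks inductively that no extra prefactor is needed, explaining why $\varphi(\Delta)=1$ for composite $\Delta$. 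If $\Delta$ is prime, the $\Delta$-division polynomial $\psi_\Delta$ has degree $(\Delta^2-1)/2$, and analyzing the pole of $\wp(P)$ at primes above $\Delta$ via the formal group shows the denominator is bounded by $\Delta^{2(\Delta^2-1)}$, which is precisely $\varphi(\Delta)$.

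For the second statement, I would apply the classical identity
$$\wp(u)-\wp(v) = -\frac{\sigma(u+v)\sigma(u-v)}{\sigma(u)^2\sigma(v)^2},$$
with $u = \beta\Omega/(\sqrt{-3})^r$ and $v = \gamma\Omega/\Delta$ (reading the $\delta$ of the statement as $\beta$). Evaluations of $\sigma$ at torsion points are elliptic units away from primes dividing the torsion order, so $\sigma(u\pm v)$ is a local unit outside primes above $3\Delta$. The factor $\sigma(v)^2$ in the denominator is cleared by $\varphi(\Delta)$ via the same division-polynomial analysis as in the first part. The factor $\sigma(u)^2$ is handled by $3^{-s}$: identifying the formal group of $E$ at $\sqrt{-3}$ with a Lubin--Tate formal group for the uniformizer $\sqrt{-3}$, a primitive $(\sqrt{-3})^r$-torsion point has formal parameter of $\sqrt{-3}$-adic valuation $1/(2\cdot 3^{r-1})$, and a computation in the $\sigma$-function expansion yields $\ord_{\sqrt{-3}}(\sigma(u)^2) = 1 - 3^{1-r} = 2s$, which is precisely the exponent absorbed by $3^{-s}$.

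The main obstacle is this last calculation: the precise $\sqrt{-3}$-adic valuation of $\sigma$ at a primitive $(\sqrt{-3})^r$-torsion point. It requires a careful identification of the formal group of $E$ at $\sqrt{-3}$, an explicit relation between its formal parameter and the variable $u$, and an inductive tracking of the ramification in the Lubin--Tate tower $K_{\sqrt{-3}}(E[(\sqrt{-3})^r])/K_{\sqrt{-3}}$, with bookkeeping of the constants appearing in the normalized $\sigma$-expansion.
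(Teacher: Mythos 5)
The paper does not reprove this statement: it is Stephens's Lemma 1 and Lemma 2, and the text only records the shape of his argument, namely an induction on $r$ in the style of Birch--Swinnerton-Dyer \cite[\S 4]{BSD}, driven by the two facts $\wp(\tfrac{\Omega}{\sqrt{-3}},\mc{L})=0$ and $\wp(\sqrt{-3}u,\mc{L})=\frac{1-\wp^3(u,\mc{L})}{3\wp^2(u,\mc{L})}$. Concretely, the three points $u$ with $\sqrt{-3}u$ fixed have $\wp(u)$ equal to the roots of $X^3+3\wp(\sqrt{-3}u)X^2-1$, whence
$$\prod_{u}\bigl(\wp(u)-\wp(v)\bigr)=3\,\wp(v)^2\bigl(\wp(\sqrt{-3}u)-\wp(\sqrt{-3}v)\bigr),$$
and since the three factors are conjugate, the exponent $s_r$ is forced to satisfy $3s_r=1+s_{r-1}$ with $s_1=0$, which is exactly $s_r=\tfrac12(1-3^{1-r})$; the first assertion is what makes the factor $\wp(v)^2$ harmless. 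Your proposal ignores this mechanism entirely and substitutes a local analysis via $\sigma$-functions and Lubin--Tate theory, so it is a genuinely different route --- but as written it has gaps that are not just bookkeeping.

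The most serious problem is the step you yourself flag as the ``main obstacle.'' First, $\sigma(u)$ evaluated at a torsion point is not an algebraic number (only suitable normalized products, e.g.\ Klein forms or $\theta(z;\mc{L})=\Delta(\mc{L})/\Delta(\mc{L}_z)$, are), so the quantity $\ord_{\sqrt{-3}}(\sigma(u)^2)$ is not well-posed without a normalization you never specify; the identity $\wp(u)-\wp(v)=-\sigma(u+v)\sigma(u-v)/\sigma(u)^2\sigma(v)^2$ does not by itself let you assign valuations to the individual factors. Second, the premise of your Lubin--Tate computation is false: the primitive $(\sqrt{-3})$-torsion points of $y^2=4x^3-1$ are $(0,\pm\sqrt{-1})$, which reduce to nonsingular points of the special fibre at $3$ and hence do \emph{not} lie in the formal group, so they have no ``formal parameter of valuation $1/(2\cdot 3^{r-1})$''; the claimed valuation $2s=1-3^{1-r}$ is reverse-engineered from the statement rather than derived. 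Finally, in the first part you assert that $\wp(P)$ is a local unit at primes of good reduction, but unit (as opposed to integrality) requires the extra observation that the zero locus of $\wp$ is exactly $E[\sqrt{-3}]\setminus\{0\}$, whose points have order $3$ prime to $\Delta$, and your treatment of the prime-$\Delta$ case and of the precise value of $\varphi(\Delta)$ is asserted rather than proved. To repair the argument you should abandon the $\sigma$-function detour and run the induction on $r$ using the multiplication-by-$\sqrt{-3}$ formula, which yields both assertions simultaneously and explains the exponent $s$.
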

This idea is similar to that of \cite[\S 4]{BSD} for the curve $y^2=4x^3-4x$, and the proof uses induction on $r$, but also uses that $\wp(\frac{\Omega}{\sqrt{-3}},\mc{L})=0$, and $\wp(\sqrt{-3}u,\mc{L})=\frac{1-\wp^3(u,\mc{L})}{3\wp^2(u,\mc{L})}$ for the Weierstrass $\wp$-function satisfying
$$\wp'^2(u,\mc{L})=4\wp^2(u,\mc{L})-1.$$ Lemma \ref{ste} is applied to the explicit expression of $E_1(c)$ to give (see the proof of \cite[Lemma 3]{stephens}) $\ord_3(E_1(c))\geq -\frac{1}{3}$.
Similarly, when $e(\lambda)=2$, we obtain from \cite[(3.9), Lemma 3]{stephens} an expression of the form
\begin{align}\label{K2}\frac{L_S(\psi_{\lambda_\alpha},1)}{\Omega}=\frac{3^\frac{5}{6}}{\Delta}\sum_{\mf{b}\in \mc{B}}\left(\frac{D_\alpha}{\mf{b}}\right)_3 E_2(c)
\end{align}
where $E_2(c)$ is another expression in terms of $\wp(\frac{9c\Omega}{\Delta},\mc{L})$ and $\ord_3(E_2(c))\geq -\frac{5}{3}$, again by Lemma \ref{ste}. As mentioned in the introduction, this expression is used in \cite{stephens} to obtain integrality of $L^{(\mathrm{alg})}(C_\lambda,1)$.  We now introduce an ``averaged $L$-value'' to study 
$\ord_3\left(L^{(\mathrm{alg})}(C_\lambda,1)\right)$ further. 

Define 
$$\Phi_\lambda=\sum_{\alpha\in (\Z/3\Z)^{n-1}} \frac{L_S(\ovl{\psi}_{3^{e(\lambda)}D_\alpha},1)}{\Omega}.$$

Let 
$$\tilde{V}=\{c \in \tilde{\mc{C}}: \left(\frac{p}{\mf{b}}\right)_3=1 \text{ for all $p\mid \Delta$, where $\mf{b}=(9c+\Delta)$}\}.$$
Since $\left(\frac{-1}{\pi}\right)_3=1$ for any prime $\pi$, we have have $c\in \tilde{V}$ then $-c\in \tilde{V}$. By the definition of the cubic residue symbol, we have $\sqrt[3]{p}\in K(\mf{f})$ for any $p\mid \Delta$. We may identify $\tilde{V}$ with the Galois group $\Gal\left(K(\mf{f})/K_9(\sqrt[3]{p_2},\ldots , \sqrt[3]{p_n})\right)$, where $K_9$ denotes the ray class field of $K$ modulo $9\O$. Similarly to how we obtained $\mc{C}$ from $\tilde{\mc{C}}$, we let
$$V=\{c \in \mc{C}: \left(\frac{p}{\mf{b}}\right)_3=1 \text{ for all $p\mid \Delta$, where $\mf{b}=(9c+\Delta)$}\}$$
so that $\#(V)=\frac{1}{2}\#(\tilde{V})$ and $c\in V$ implies $-c\not\in V$.

\begin{lem}\label{ordPhi} Let $\lambda>1$ be a cube-free integer such that $\lambda=3^{e(\lambda)}D$ with $(3,D)=1$ and $\ord_3(\lambda)=e(\lambda)\in \{1,2\}$. Then
$$
\ord_3\left(\Phi_{\lambda}\right)\geq \left\{ \begin{array}{ll}
n(\lambda)-\frac{13}{6}&\mbox{if $e(\lambda)=1$} \\
n(\lambda)-\frac{11}{6}   & \mbox{if $e(\lambda)=2$.}
       \end{array} \right.
$$
\end{lem}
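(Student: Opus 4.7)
The plan is to substitute the explicit formulas \eqref{K1} and \eqref{K2} into the definition of $\Phi_\lambda$ and interchange the order of summation over $\alpha$ and over $\mf{b} \in \mc{B}$. When $e(\lambda)=1$, this gives
\[
\Phi_\lambda = \frac{1}{3^{5/6}\Delta} \sum_{\mf{b} \in \mc{B}} E_1(c) \sum_{\alpha \in (\Z/3\Z)^{n-1}} \left(\frac{D_\alpha}{\mf{b}}\right)_3,
\]
where $\mf{b}=(9c+\Delta)$, and analogously for $e(\lambda)=2$ with $E_2$ in place of $E_1$ and prefactor $3^{5/6}/\Delta$.

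The central step is to evaluate the inner sum by character orthogonality. Using multiplicativity of the cubic residue symbol, I would write $\left(\frac{D_\alpha}{\mf{b}}\right)_3 = \prod_{i=2}^n \left(\frac{p_i}{\mf{b}}\right)_3^{\alpha_i}$, so that the inner sum factors as a product of $n-1$ geometric sums of the form $\sum_{\alpha_i=0}^{2}\zeta^{\alpha_i}$ with $\zeta$ a cube root of unity. Each such factor equals $3$ when $\left(\frac{p_i}{\mf{b}}\right)_3=1$ and vanishes otherwise. Hence the inner sum equals $3^{n-1}$ precisely when $\left(\frac{p_i}{\mf{b}}\right)_3=1$ for every $i=2,\ldots,n$, i.e.\ when $c \in V$, and is zero otherwise. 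Thus $\Phi_\lambda$ collapses to
\[
\Phi_\lambda = \frac{3^{n-1}}{3^{5/6}\Delta}\sum_{c\in V} E_1(c) \qquad (e(\lambda)=1),
\]
and to the analogous expression with prefactor $3^{n-1}\cdot 3^{5/6}/\Delta$ and summand $E_2(c)$ when $e(\lambda)=2$.

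The conclusion then follows from the non-archimedean triangle inequality together with the bounds $\ord_3(E_1(c)) \geq -\tfrac{1}{3}$ and $\ord_3(E_2(c)) \geq -\tfrac{5}{3}$ derived from Lemma \ref{ste}, using that $\ord_3(\Delta)=0$ since $(D,3)=1$. For $e(\lambda)=1$ this yields $\ord_3(\Phi_\lambda) \geq (n-1) - \tfrac{5}{6} - \tfrac{1}{3} = n - \tfrac{13}{6}$, and for $e(\lambda)=2$ one obtains $(n-1) + \tfrac{5}{6} - \tfrac{5}{3} = n - \tfrac{11}{6}$. There is no real obstacle to speak of; the character-orthogonality step \emph{is} the raison d'\^{e}tre of the averaging definition of $\Phi_\lambda$. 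The only subtlety is to confirm that the formulas \eqref{K1} and \eqref{K2} may be applied uniformly for all $\alpha$, even when $\mf{f}_\alpha$ strictly divides $\mf{f}$ — but this is exactly why one works throughout with the imprimitive $L$-function $L_S$ attached to the full set $S$ of primes dividing $\lambda$.
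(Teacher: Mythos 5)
Your proposal is correct and follows essentially the same route as the paper: substitute \eqref{K1} (resp.\ \eqref{K2}), swap the two sums, apply character orthogonality on $(\Z/3\Z)^{n-1}$ to collapse the inner sum to $3^{n-1}$ exactly when $c\in V$, and then apply the ultrametric inequality with the bounds on $\ord_3(E_i(c))$. The only cosmetic difference is that you factor the orthogonality relation into $n-1$ geometric sums, whereas the paper phrases it as the inner product of the character $\alpha\mapsto\left(\frac{D_\alpha}{\mf{b}}\right)_3$ with the trivial character; the computation and the final valuations agree.
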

\begin{proof}

Suppose that $e(\lambda)=1$, and given $\alpha\in (\Z/3\Z)^{n-1}$ write $\lambda_\alpha=3D_\alpha$ with $(3,D_\alpha)=1$. By \eqref{K1}, we have
\begin{align*}\frac{L_S(\psi_{\lambda_\alpha},1)}{\Omega}=\frac{1}{3^\frac{5}{6}\Delta}\sum_{\mf{b}\in \mc{B}}\left(\frac{D_\alpha}{\mf{b}}\right)_3 E_1(c)
\end{align*}
where $\ord_3(E_1(c))\geq -\frac{1}{3}$. Now, sending $\alpha$ to $\left(\frac{D_\alpha}{\mf{b}}\right)_3$ gives an irreducible character of the abelian group $(\Z/3\Z)^{n-1}$. Thus, $\frac{1}{|(\Z/3\Z)^{n-1}|}\sum_{\alpha \in (\Z/3\Z)^{n-1}}\left(\frac{D_\alpha}{\mf{b}}\right)_3$ is its inner product with the trivial character, and is equal to $1$ precisely when $\mf{b}=(9c+\Delta)$ with $c\in V$ and is equal to $0$ otherwise. It follows that 
\begin{align*}\Phi_{\lambda}&=\frac{1}{3^\frac{5}{6}\Delta}\sum_{\alpha\in (\Z/3\Z)^{n-1}}\sum_{c\in \mc{C}}\left(\frac{D_\alpha}{\mf{b}}\right)_3 E_1(c)\\
&=\frac{1}{3^\frac{5}{6}\Delta}\sum_{c\in V}3^{n-1} E_1(c).
\end{align*}
Thus $\ord_3(\Phi_{\lambda})\geq-\frac{5}{6}+n-1-\frac{1}{3}=n-\frac{13}{6}$.
Next, let $e(\lambda)=2$, and write $\lambda_\alpha=3^2D_\alpha$ with $(3,D_\alpha)=1$. By \eqref{K2}, we have
\begin{align*}\frac{L_S(\psi_{\lambda_\alpha},1)}{\Omega}=\frac{3^\frac{5}{6}}{\Delta}\sum_{\mf{b}\in \mc{B}}\left(\frac{D_\alpha}{\mf{b}}\right)_3 E_2(c)
\end{align*}
where $\ord_3(E_2(c))\geq -\frac{5}{3}$. Thus 

\begin{align*}\Phi_{\lambda}&=\frac{3^\frac{5}{6}}{\Delta}\sum_{\alpha\in (\Z/3\Z)^{n-1}}\sum_{\mf{b}\in \mc{B}}\left(\frac{D_\alpha}{\mf{b}}\right)_3 E_2(c)\\
&=\frac{3^\frac{5}{6}}{\Delta}\sum_{c\in V}3^{n-1} E_2(c).
\end{align*}
Thus $\ord_3(\Phi_{\lambda})\geq \frac{5}{6}+n-1-\frac{5}{3}=n-\frac{11}{6}$.
\end{proof}
Following the argument of \cite[Corollary 2.3]{yukatam}, we obtain 
\begin{cor}\label{ordPhichi} Let $\lambda>1$ be a cube-free integer such that $\lambda=3^{e(\lambda)}D$ with $(3,D)=1$ and $\ord_3(\lambda)=e(\lambda)\in \{1,2\}$. For any character $\chi$ of $(\Z/3\Z)^{n-1}$, define 
$$\Phi_{\lambda}^{(\chi)}= \sum_{\alpha\in (\Z/3\Z)^{n-1}}\chi(\alpha)\frac{L_S(\ovl{\psi}_{3^{e(\lambda)}D_\alpha},1)}{\Omega}$$
Then we have
$$
\ord_3\left(\Phi_{\lambda}^{(\chi)}\right)\geq \left\{ \begin{array}{ll}
n(\lambda)-\frac{13}{6}&\mbox{if $e(\lambda)=1$} \\
n(\lambda)-\frac{11}{6}   & \mbox{if $e(\lambda)=2$.}
       \end{array} \right.
$$
\end{cor}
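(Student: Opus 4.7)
The plan is to mimic the proof of Lemma \ref{ordPhi}, replacing the orthogonality argument against the trivial character by an orthogonality argument against $\chi$. Concretely, for $e(\lambda) = 1$ I would start from \eqref{K1} and substitute into the definition of $\Phi_{\lambda}^{(\chi)}$ to get
\begin{align*}
\Phi_{\lambda}^{(\chi)} &= \frac{1}{3^{5/6}\Delta}\sum_{\alpha \in (\Z/3\Z)^{n-1}} \chi(\alpha) \sum_{\mf{b} \in \mc{B}} \left(\frac{D_\alpha}{\mf{b}}\right)_3 E_1(c) \\
&= \frac{1}{3^{5/6}\Delta}\sum_{\mf{b} \in \mc{B}} \left( \sum_{\alpha \in (\Z/3\Z)^{n-1}} \chi(\alpha) \left(\frac{D_\alpha}{\mf{b}}\right)_3 \right) E_1(c),
\end{align*}
where $\mf{b} = (9c+\Delta)$. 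The analogous rearrangement for $e(\lambda)=2$ starts from \eqref{K2}.

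Next I would apply character orthogonality on $(\Z/3\Z)^{n-1}$. For each fixed $\mf{b} \in \mc{B}$, the map $\alpha \mapsto (D_\alpha/\mf{b})_3$ is a character of $(\Z/3\Z)^{n-1}$, so $\chi(\alpha)\left(\frac{D_\alpha}{\mf{b}}\right)_3$ is itself a character and its sum over $\alpha$ is $3^{n-1}$ if this product is trivial and $0$ otherwise. Thus, defining
$$V^{(\chi)} = \left\{ c \in \mc{C} : \left(\frac{D_\alpha}{\mf{b}}\right)_3 = \overline{\chi}(\alpha) \text{ for all } \alpha, \text{ where } \mf{b}=(9c+\Delta) \right\},$$
one obtains
$$\Phi_{\lambda}^{(\chi)} = \frac{3^{n-1}}{3^{5/6}\Delta} \sum_{c \in V^{(\chi)}} E_1(c)$$
in the case $e(\lambda)=1$, and the analogous identity with $\frac{3^{5/6}}{\Delta}$ and $E_2(c)$ when $e(\lambda)=2$.

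Finally, I would invoke the bounds $\ord_3(E_1(c)) \geq -\frac{1}{3}$ and $\ord_3(E_2(c)) \geq -\frac{5}{3}$ from Lemma \ref{ste} (as used in Stephens \cite{stephens}), together with $\ord_3(\Delta)=0$, to conclude
$$\ord_3(\Phi_\lambda^{(\chi)}) \geq -\tfrac{5}{6} + (n-1) - \tfrac{1}{3} = n - \tfrac{13}{6} \qquad (e(\lambda)=1),$$
$$\ord_3(\Phi_\lambda^{(\chi)}) \geq \tfrac{5}{6} + (n-1) - \tfrac{5}{3} = n - \tfrac{11}{6} \qquad (e(\lambda)=2),$$
which are exactly the claimed bounds.

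There is really no new obstacle compared to Lemma \ref{ordPhi}: the only modification is that the inner $\alpha$-sum is now the inner product of $\alpha \mapsto (D_\alpha/\mf{b})_3$ with $\overline{\chi}$ instead of with the trivial character. The set $V^{(\chi)}$ replaces $V$ but has cardinality at most $\#\mc{C}$, and the termwise $3$-adic bound on $E_1$ and $E_2$ carries through verbatim. The mildly delicate point one should check is that the rearrangement is valid with the chosen set of representatives $\mc{C}$ (as opposed to $\tilde{\mc{C}}$); this is already handled in the proof of Lemma \ref{ordPhi} by the same symmetry $c \leftrightarrow -c$ argument, and that observation transfers directly since $\chi(\alpha)$ does not depend on $\mf{b}$.
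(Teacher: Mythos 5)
Your argument is correct and is essentially the proof the paper has in mind: the paper simply invokes the analogue of \cite[Corollary 2.3]{yukatam}, which is exactly this twisted-orthogonality variant of the proof of Lemma \ref{ordPhi}, replacing $V$ by the level set where $\alpha\mapsto\left(\frac{D_\alpha}{\mf{b}}\right)_3$ equals $\ovl{\chi}$ and keeping the termwise bounds on $E_1(c)$, $E_2(c)$. No further comment is needed.
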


We are now ready to prove the main result of this paper. 
\begin{thm}\label{main3} Let $\lambda> 1$ be a cube-free integer such that $\ord_3(\lambda)=e(\lambda)\in \{1,2\}$. Then
$$
\ord_3(L^{(\mathrm{alg})}(C_\lambda,1))\geq n(\lambda)-e(\lambda).
$$
\end{thm}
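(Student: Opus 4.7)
The plan is to combine the uniform bound from Corollary \ref{ordPhichi} with Fourier inversion on the finite abelian group $G=(\Z/3\Z)^{n-1}$, together with an induction on $n=n(\lambda)$. Writing $\alpha_\lambda\in G$ for the exponent tuple corresponding to $\lambda$, Fourier inversion on $G$ gives
$$\frac{L_S(\ovl{\psi}_\lambda,1)}{\Omega}=\frac{1}{3^{n-1}}\sum_{\chi}\ovl{\chi}(\alpha_\lambda)\,\Phi_\lambda^{(\chi)},$$
where $\chi$ ranges over the characters of $G$. Since $\mf{f}_\lambda=\mf{f}$ for the specific $\lambda$ at hand, no Euler factors are omitted, so the left-hand side coincides with $L(\ovl{\psi}_\lambda,1)/\Omega$. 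Via \eqref{phialg}, a $3$-adic lower bound on this quantity translates directly into one on $\ord_3(L^{(\mathrm{alg})}(C_\lambda,1))$ up to the shift $5/6$ if $e(\lambda)=1$ or $1/6$ if $e(\lambda)=2$.

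The induction is on $n$. The base case $n=1$, i.e.\ $\lambda=3^{e(\lambda)}$, is handled by the integrality of $L^{(\mathrm{alg})}(C_\lambda,1)$ established by Stephens, which yields $\ord_3\geqs 0\geqs 1-e(\lambda)$. For the inductive step, I would split the Fourier-inverted sum into contributions from easy $\alpha$ (those with $n(\lambda_\alpha)<n$) and hard $\alpha$ (those with $n(\lambda_\alpha)=n$, of which there are $2^{n-1}$). For easy $\alpha$, the inductive hypothesis together with \eqref{phialg}, and the fact that the omitted Euler factors at primes $\neq 3$ are $3$-adic units, gives the strong bound $\ord_3(A_{\lambda_\alpha}^S)\geqs n(\lambda_\alpha)-e(\lambda)+c_e$ with $c_e\in\{5/6,1/6\}$. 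Subtracting these easy contributions from each $\Phi_\lambda^{(\chi)}$ leaves the restricted sum $\sum_{\alpha\text{ hard}}\chi(\alpha)\,A_{\lambda_\alpha}^S$, and a further application of character orthogonality on $G$ (via the inclusion--exclusion expansion of the indicator of the hard subset) isolates the value $A_\lambda^S$ itself.

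The chief obstacle is that naive Fourier inversion combined with Corollary \ref{ordPhichi} alone yields only $\ord_3(A_\lambda)\geqs -7/6$ (resp.\ $-5/6$ for $e(\lambda)=2$), independent of $n$: the factor $3^{n-1}$ gained in Lemma \ref{ordPhi} through orthogonality on $G$ is exactly cancelled upon inversion. Closing this gap requires the inductive hypothesis to absorb the $3^{n-1}$ loss via the easy-$\alpha$ contributions, and the resulting bound on $\ord_3(L^{(\mathrm{alg})}(C_\lambda,1))$ emerges as a fractional valuation of the form $n(\lambda)-e(\lambda)-\delta$ with $0<\delta<1$. One then rounds up to the integer bound $n(\lambda)-e(\lambda)$ using the algebraic integrality of $L^{(\mathrm{alg})}(C_\lambda,1)$ guaranteed by Coates--Wiles.
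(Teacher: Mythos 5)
Your overall skeleton — averaging over cubic twists, the uniform bound of Corollary \ref{ordPhichi}, induction on $n(\lambda)$, and a final round-up via integrality — matches the paper's strategy, and you have correctly identified the central obstacle: inverting the character sum over $(\Z/3\Z)^{n-1}$ cancels exactly the factor $3^{n-1}$ gained by orthogonality. However, your proposed resolution does not close this gap. Subtracting the ``easy'' contributions (controlled by the inductive hypothesis) from each $\Phi_\lambda^{(\chi)}$ indeed leaves the restricted sums $\sum_{\alpha\ \mathrm{hard}}\chi(\alpha)A^S_{\lambda_\alpha}$ with valuation $\geq n-\tfrac{13}{6}$, but recovering the single value $A^S_\lambda$ from these $3^{n-1}$ linear combinations still requires inverting the character matrix restricted to the $2^{n-1}$ hard indices; the easy terms play no role in that inversion. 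Coordinate by coordinate, the cheapest isolating functional is of the shape $\frac{1}{\omega-\omega^2}\bigl(\Phi^{(\chi)}-\omega^2\Phi\bigr)$, costing $\ord_3(\omega-\omega^2)=\tfrac12$ per prime, so a purely linear-algebraic extraction loses $\tfrac{n-1}{2}$ and yields a bound growing like $\tfrac{n}{2}$, not $n$; a single application of integrality at the end recoups strictly less than $1$ and cannot repair this. Your assertion that the bound ``emerges as $n(\lambda)-e(\lambda)-\delta$ with $0<\delta<1$'' is therefore exactly the statement that needs proof, and the mechanism you offer does not produce it.

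For comparison, the paper's proof handles the extraction differently. First, it uses the root number computation (Proposition \ref{proproot}) to dispose of half of the top-level twists outright: e.g.\ for $n=2$, $e(\lambda)=1$ one may assume $p$ is split since otherwise $\epsilon(C_{3p})=\epsilon(C_{3p^2})=-1$ forces the $L$-values to vanish; similarly $L(\ovl{\psi}_9,1)=0$ is used when $e(\lambda)=2$. Second, the surviving term is isolated by the explicit combination $\Phi_{3p}^{(\chi)}-\omega^2\Phi_{3p}$, which annihilates the $\alpha=2$ term and costs only $\ord_3(\omega-\omega^2)=\tfrac12$, a loss that \emph{is} recoverable by integrality of $L^{(\mathrm{alg})}$ (which, incidentally, is due to Stephens' computation, not Coates--Wiles). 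Third, for $n>2$ the passage from the bound on $\sum_{n(D_\alpha)=n-1}A^S_{3D_\alpha}$ to the same bound on each individual summand is precisely the content of the proof of Theorem 2.4 of the author's earlier paper, which is invoked rather than reproved; this step is the genuine mathematical content that your proposal replaces with an appeal to orthogonality that cannot work as stated. You would need either to import that argument explicitly or to supply an alternative extraction whose $3$-adic cost is bounded independently of $n$.
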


\begin{proof}
We have $L^{(\mathrm{alg})}(C_1,1)=\frac{1}{3}$, $L^{(\mathrm{alg})}(C_3,1)=1$ and $L^{(\mathrm{alg})}(C_9,1)=0$. It suffices to show by induction on $n(\lambda)$ that $\ord_3\left(\frac{L(\ovl{\psi}_{\lambda},1)}{\Omega}\right)\geq n(\lambda) - \frac{11}{6}$ when $e(\lambda)=1$ and $\ord_3\left(\frac{L(\ovl{\psi}_{\lambda},1)}{\Omega}\right)\geq n(\lambda) - \frac{13}{6}$ when $e(\lambda)=2$. We first treat the case $e(\lambda)>0$ and $n(\lambda)=2$. We may assume $\lambda=3^{e(\lambda)}p$, and the case $\lambda=3^{e(\lambda)}p^2$ is similar. Given a prime number $p$, the root number satisfies (see Proposition \ref{proproot}) $\epsilon(C_{3p}/\Q)=\epsilon(C_{3p^2}/\Q)=\left(\frac{-3}{p}\right)$. Thus, we may assume $p$ is split, say $p\O=(\pi)(\ovl{\pi})$ with $\pi\equiv 1\bmod 3\O$. Then we have
\begin{align*}\Phi_{3p}&=\frac{L_S(\ovl{\psi}_3,1)}{\Omega}+\frac{L_S(\ovl{\psi}_{3p},1)}{\Omega}+\frac{L_S(\ovl{\psi}_{3p^2},1)}{\Omega}\\
&=\left(1-\frac{\ovl{\psi}_3((\pi))}{\mathrm{N}\pi}\right)\left(1-\frac{\ovl{\psi}_3((\ovl{\pi}))}{\mathrm{N}\ovl{\pi}}\right)\frac{L(\ovl{\psi}_3,1)}{\Omega}+\frac{L(\ovl{\psi}_{3p},1)}{\Omega}+\frac{L(\ovl{\psi}_{3p^2},1)}{\Omega}\\
&=\left(\frac{\pi -\left(\frac{3}{\pi}\right)_3}{\pi}\right)\left(\frac{\ovl{\pi}-\left(\frac{3}{\ovl{\pi}}\right)_3}{\ovl{\pi}}\right)\frac{L(\ovl{\psi}_3,1)}{\Omega}+\frac{L(\ovl{\psi}_{3p},1)}{\Omega}+\frac{L(\ovl{\psi}_{3p^2},1)}{\Omega}.
\end{align*}
We have $\ord_3\left(\left(\frac{\pi -\left(\frac{3}{\pi}\right)_3}{\pi}\right)\left(\frac{\ovl{\pi}-\left(\frac{3}{\ovl{\pi}}\right)_3}{\ovl{\pi}}\right)\frac{L(\ovl{\psi}_3,1)}{\Omega}\right)\geq \frac{1}{2}+\frac{1}{2}+0=1$. We write $\chi$ for the character of $(\Z/3\Z)$ sending $1$ to $\omega$. Then 
$$\Phi_{3p}^{(\chi)}-\omega^2\Phi_{3p}=(1-\omega)\frac{L_S(\ovl{\psi}_{3},1)}{\Omega}+(\omega-\omega^2)\frac{L(\ovl{\psi}_{3p},1)}{\Omega}.$$
By Lemma \ref{ordPhi} and Corollary \ref{ordPhichi}, $\ord_3(\Phi_{3p}^{(\chi)}-\omega^2\Phi_{3p})\geq 2-\frac{13}{6}=-\frac{1}{6}$. Since $\ord_3(\omega-\omega^2)=\frac{1}{2}$, it follows that $\ord_3\left(\frac{L(\ovl{\psi}_{3p},1)}{\Omega}\right)\geq -\frac{1}{6}-\frac{1}{2}=-\frac{2}{3}$.
Thus, $\ord_3\left(L^{(\mathrm{alg})}(C_{3p},1)\right)=-\frac{2}{3}+\frac{5}{6}=\frac{1}{6}$, and since $L^{\mathrm{(alg)}}(C_{3p},1)\in \Z$, it follows that $\ord_3\left(L^{(\mathrm{alg})}(C_{3p},1)\right)\geq 1=n(3p)-e(3p)$, as required. Next, we consider the case $n(\lambda)=e(\lambda)=2$. Given a prime number $p$, the root number satisfies $\epsilon(C_{9p}/\Q)=\epsilon(C_{9p^2}/\Q)=-\left(\frac{-3}{p}\right)$. Thus, we may assume $p$ is inert. Then we have
\begin{align*}\Phi_{9p}&=\frac{L_S(\ovl{\psi}_9,1)}{\Omega}+\frac{L_S(\ovl{\psi}_{9p},1)}{\Omega}+\frac{L_S(\ovl{\psi}_{9p^2},1)}{\Omega}\\
&=\left(1-\frac{\ovl{\psi}_9((p))}{\mathrm{N}p}\right)\frac{L(\ovl{\psi}_9,1)}{\Omega}+\frac{L(\ovl{\psi}_{9p},1)}{\Omega}+\frac{L(\ovl{\psi}_{9p^2},1)}{\Omega}\\
&=\frac{L(\ovl{\psi}_{9p},1)}{\Omega}+\frac{L(\ovl{\psi}_{9p^2},1)}{\Omega},
\end{align*}
since we have $L(\ovl{\psi}_9,1)=0$. Similarly, we have $\ord_3(\Phi_{9p}^{(\chi)})\geq \frac{1}{6}$ for any character $\chi$ of $(\Z/3\Z)$, so that $\ord_3\left(\frac{L(\ovl{\psi}_{9p},1)}{\Omega}\right)\geq \frac{1}{6}-\frac{1}{2}=-\frac{1}{3}$. It follows that $\ord_3\left(L^{(\mathrm{alg})}(C_{9p},1)\right)=-\frac{1}{3}+\frac{1}{6}=-\frac{1}{6}$. Since $L^{(\mathrm{alg})}(C_{9p},1)\in \Z$ by \cite{stephens}, it follows that $\ord_3\left(L^{(\mathrm{alg})}(C_{9p},1)\right)=0=n(9p)-e(9p)$, as required.

Now suppose that $n(\lambda)>2$ and $\lambda=3D$ with $(3,D)=1$. Assume that given any $\beta\in (\Z/3\Z)^{n-1}$ with $n(D_\beta)<n(D)$, we have $\ord_3\left(\frac{L(\ovl{\psi}_{3D_\beta},1)}{\Omega}\right)\geq n(3D_\beta) - \frac{11}{6}$. 
Then

$$\Phi_{\lambda}=\frac{L_S(\ovl{\psi}_3,1)}{\Omega}+\sum_{n(D_\beta)<n(\lambda)-1}\frac{L_S(\ovl{\psi}_{3D_\beta},1)}{\Omega}+\sum_{n(D_\alpha)=n(\lambda)-1}\frac{L_S(\ovl{\psi}_{3D_\alpha},1)}{\Omega}.$$
The first term on the right hand side is
$$\frac{L_S(\ovl{\psi}_3,1)}{\Omega}=\prod_{p\mid D \text{ inert}}\left(\frac{p^2+p}{p}\right)\prod_{p\mid D \text{ split}} \left(\frac{\pi-\left(\frac{3}{\pi}\right)_3}{\pi}\right)\left(\frac{\ovl{\pi}-\left(\frac{3}{\ovl{\pi}}\right)_3}{\ovl{\pi}}\right)\frac{L(\ovl{\psi}_3,1)}{\Omega},$$
and thus $\ord_3\left(\frac{L_S(\ovl{\psi}_3,1)}{\Omega}\right)\geq r(\lambda)+s(\lambda)= n(\lambda)-1$. Next, given $\beta\in (\Z/3\Z)^{n-1}$  such that $n(D_\beta)<n(\lambda)-1$, we have
\begin{small}
\begin{align*}\frac{L_S(\ovl{\psi}_{3D_\beta},1)}{\Omega}&=\prod_{\substack{p\in S\backslash S_\beta\\\text{inert}}}\left(\frac{p^2+p}{p}\right) \prod_{\substack{p\in S\backslash S_\beta\\  \text{split} }}\left(\frac{\pi-\left(\frac{3D_\beta}{\pi}\right)_3}{\pi}\right)\left(\frac{\ovl{\pi}-\left(\frac{3D_\beta}{\ovl{\pi}}\right)_3}{\ovl{\pi}}\right)\frac{L(\ovl{\psi}_{3D_\beta},1)}{\Omega}.
\end{align*}
\end{small}
By the induction hypothesis, we have
\begin{align*}\ord_3\left(\frac{L_S(\ovl{\psi}_{3D_\beta},1)}{\Omega}\right)&=\left(r(\lambda)-r(3D_\beta)\right)+\frac{1}{2}\left(2s(\lambda)-2s(3D_\beta)\right)+n(3D_\beta) - \frac{11}{6}\\
&=r(\lambda)+s(\lambda)+1-\frac{11}{6}=n(\lambda)-\frac{11}{6}.
\end{align*}

Thus we have shown that $\ord_3\left(\sum_{\alpha}\frac{L(\ovl{\psi}_{3D_\alpha},1)}{\Omega}\right)\geq n(\lambda)-\frac{11}{6}.$
By the proof of \cite[Theorem 2.4]{yukatam}, this lower bound applies to the individual summands. In particular, $\ord_3\left(\frac{L(\ovl{\psi}_{3D},1)}{\Omega}\right)\geq n(\lambda)-\frac{11}{6}$. It follows that $\ord_3\left(L^{(\mathrm{alg})}(C_{3D},1)\right)\geq n(\lambda)-1=n(3D)-e(3D)$, as required. In the case where $\lambda=9D$ with $(3,D)=1$, the argument showing $\ord_3\left(L^{(\mathrm{alg})}(C_{9D},1)\right)\geq n(\lambda)-2=n(9D)-e(9D)$ is identical, with $9$ replacing $3$ and $\frac{13}{6}$ replacing $\frac{11}{6}$.

\end{proof}

\section{On the $3$-part of the Tate--Shafarevich group}\label{descent}

In this section, we give a $3$-descent argument following \cite{satge}. Given any cube-free positive integer $\lambda=3^{e(\lambda)}D$ with $(D,3)=1$, we write 
$$E_\lambda:  y^2z=x^3+kz^3,$$
where 
$$
k =  \left\{ \begin{array}{ll}
-2^43^3\lambda^2&\mbox{if $e(\lambda)=0$ or $1$,} \\
-2^43D^2   & \mbox{if $e(\lambda)=2$.}
       \end{array} \right.
$$
This is birationally equivalent to $C_\lambda: x^3+y^3=\lambda$, and over $\Q$ it is $3$-isogenous to 
$$E_\lambda': y^2z=x^3+2^4\lambda^2z^3.$$
We have an isogeny $\phi: E_\lambda \to E_\lambda'$ explicitly given by
$$
\phi(x,y,z)= \left\{ \begin{array}{ll}
(3^{-2}(x^4-2^2kxz^3), 3^{-3}y(x^3+2^3kz^3),x^3z)&\mbox{if $e(\lambda)=0$ or $1$} \\
(x^4+4kxz^3, y(x^3-2^3kz^3), x^3z) & \mbox{if $e(\lambda)=2$}
       \end{array} \right.
$$
and the kernel of $\phi$ is $\{(0,1,0), (0, \pm \sqrt{k}, 1)\}$.
We write $\widehat{\phi}: E_\lambda'\to E_\lambda$ for the dual isogeny. Then, writing $k'=2^4\lambda^2$, we have
$$
\widehat{\phi}(x,y,z)=  \left\{ \begin{array}{ll}
(x^4+2^2k'xz^3, y(x^3-2^3k'z^3), x^3z) &\mbox{if $e(\lambda)=0$ or $1$} \\
(3^{-2}(x^4-2^2k'xz^3), 3^{-3}y(x^3+2^3k'z^3),x^3z)& \mbox{if $e(\lambda)=2$},
       \end{array} \right.
$$
the kernel of $\widehat{\phi}$ is $\{(0,1,0), (0, \pm 2^2\lambda, 1)\}$ and $\widehat{\phi}\circ \phi =[3]$ gives the multiplication-by-$3$-map. Recall that given any isogeny $\varphi: A\to A'$ between elliptic curves defined over a number field $F$, we have the exact sequence:
\begin{equation}\label{sel}
0\to A'(F)/\varphi  A(F)\to \Sel_\varphi(A/F)\to \Sha(A/F)[\varphi]\to 0
\end{equation}
where $\Sel_\varphi(A/F)$ denotes the $\varphi$-Selmer group of $A/F$ and $\Sha(A/F)[\varphi]$ denotes the $\varphi$-torsion part of the Tate--Shafarevich group of $A/F$. 

Before we state the next result, we recall the following useful hypothesis from \cite[p. 312]{satge}.

\begin{hyp}[H] For each prime divisor $p_i$ of $\lambda$ such that $p_i\equiv 1\bmod 3$, $p_j$ is a cube modulo $p_i$ for every $i\neq j$.
\end{hyp}

\begin{thm}\label{3descent} Let $C_\lambda: x^3+y^3=\lambda$, where $\lambda > 1$ is a cube-free integer. Let $s(\lambda)$ (resp. $r(\lambda)$) be the number of distinct primes dividing $\lambda$ which are congruent to $1$ (resp. $2$) modulo $3$. Let $d$ denote the number of distinct prime divisors $p$ of $\lambda$ such that $p\equiv 1\bmod 3$ and $3$ is not a cube modulo $p$. Then the $3$-Selmer group $\Sel_3(C_\lambda/\Q)$ of $C_\lambda$ over $\Q$ satisfies:
\begin{enumerate}
\item Suppose Hypothesis (H) holds, $s(\lambda)=d$, and one of the following conditions holds.
\begin{enumerate}[(i)]
\item $e(\lambda)=1$.
\item $e(\lambda)=2$ and there is an inert prime divisor $p\mid \lambda$ such that $p\neq 3$ and $p\not\equiv  8 \bmod 9$.

\item $e(\lambda)=0$, $s(\lambda)=0$ and there is a prime divisor $p\mid \lambda$ such that $p\not\equiv 8 \bmod 9$.
\end{enumerate}
Then,
 $$\dim_{\mb{F}_3}\left(\Sha(C_\lambda/\Q)[3]\right)=r(\lambda)-t(\lambda)-1-\rank(C_\lambda),$$
where $t(\lambda)=1$ if $\lambda\equiv \pm 1 \bmod 9$ (note that we have $r(\lambda)\geq 2$ automatically in this case), $t(\lambda)=-1$ if $\ord_3(\lambda)=1$ and $t(\lambda)=0$ otherwise. 
\item If $e(\lambda)=2$ $s(\lambda)=d$ and $p \equiv 8\bmod 9$ for every prime divisor $p\mid (\lambda/9)$ such that $p\equiv 2\bmod 3$, then 
$$
\dim_{\mb{F}_3}\left(\Sha(C_\lambda/\Q))[3]\right) =  r(\lambda)+1-\rank(C_\lambda).
$$
\item If $e(\lambda)=0$ and and $p\equiv 8\bmod 9$ for every prime divisor $p\mid \lambda$, then
$$\dim_{\mb{F}_3}\left(\Sha(C_\lambda/\Q)[3]\right) = r(\lambda)-\rank(C_\lambda).$$

\end{enumerate}

\end{thm}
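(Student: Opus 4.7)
The proof is a $3$-isogeny descent following Satg\'e \cite{satge}, carried out simultaneously for $\phi\colon E_\lambda\to E'_\lambda$ and its dual $\widehat\phi\colon E'_\lambda\to E_\lambda$ displayed above. The key input is \cite[Proposition 1.7, Th\'eor\`eme 1.14]{satge}, which, after base change to $K$, identifies $\Sel_\phi(E_\lambda/K)$ with the subspace of $\Hom(\Gal(\ovl\Q/K),\Z/3\Z)$ consisting of homomorphisms whose kernel fixes a pure cubic extension $K(\sqrt[3]{D_\alpha})$ with $\alpha\in(\Z/3\Z)^{n-1}$, subject to local conditions at every bad prime: at an inert prime $p\neq 3$ the condition is automatic; at a split prime $p=\mf p\ovl{\mf p}$ it requires $D_\alpha$ to be a cube in $K_{\mf p}^\times$; and at the prime above $3$ it depends finely on $e(\lambda)$ and $\lambda\bmod 9$.

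The plan is in three steps. First, I would compute upper bounds on $\dim_{\mb F_3}\Sel_\phi(E_\lambda/\Q)$ and $\dim_{\mb F_3}\Sel_{\widehat\phi}(E'_\lambda/\Q)$ by counting admissible $\alpha$: Hypothesis (H) is precisely what makes every split-prime condition trivial (via cubic reciprocity, ``$p_j$ a cube modulo $p_i$'' translates to the local cube class of $D_\alpha$ at $\mf p\mid p_i$ being trivial), leaving only the local condition at $3$, which produces an explicit correction depending on $e(\lambda)$ and $\lambda\bmod 9$. Second, I would combine the two descent exact sequences \eqref{sel} with the identity $\widehat\phi\circ\phi=[3]$ (via the snake lemma) to express $\dim_{\mb F_3}\Sel_3(C_\lambda/\Q)$ in terms of $\dim_{\mb F_3}\Sel_\phi+\dim_{\mb F_3}\Sel_{\widehat\phi}$ plus a torsion correction; since $C_\lambda(\Q)_{\mathrm{tor}}$ is trivial for cube-free $\lambda>1$ with $\lambda\neq 2$, this correction vanishes, and subtracting $\rank(C_\lambda)$ from $\dim_{\mb F_3}\Sel_3$ isolates $\dim_{\mb F_3}\Sha(C_\lambda/\Q)[3]$. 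Third, I would verify case by case that the hypotheses of parts (1)(i)--(iii), (2), (3) pin down the correction terms to exactly the values matching each stated formula, using \eqref{Sha3} as the reverse inequality in part (1).

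The principal obstacle will be the local computation at $3$: since $3$ is simultaneously the residue characteristic and the degree of the isogeny, determining the local images $E_\lambda(\Q_3)/\phi E_\lambda(\Q_3)$ and $E'_\lambda(\Q_3)/\widehat\phi E'_\lambda(\Q_3)$ requires explicit formal-group work on the models $y^2z=x^3+kz^3$ above, and the answer genuinely varies with $e(\lambda)$ and $\lambda\bmod 9$---this is exactly what produces the case split (i)--(iii) in part (1). A second subtlety is that in parts (2) and (3), the assumption that every relevant inert prime $p$ satisfies $p\equiv 8\bmod 9$ imposes extra cubic residue identities that force additional $\widehat\phi$-Selmer classes to survive globally, enlarging $\Sha[3]$ by $2$ beyond the lower bound of \eqref{Sha3}; tracking this through Satg\'e's reciprocity is where the bookkeeping is heaviest.
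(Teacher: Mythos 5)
Your overall framework --- the two $3$-isogeny descent sequences combined via the snake lemma, the vanishing torsion correction, and Satg\'e's identification of the $\phi$-Selmer group with cubic-field homomorphisms subject to local conditions trivialised by Hypothesis (H) --- matches the paper's proof, and for part (1) your plan is essentially sound: the paper gets the upper bound not by redoing the formal-group computation at $3$ but by quoting Satg\'e's Th\'eor\`eme 2.9(3), which under conditions (i)--(iii) gives $\Sel_\phi(E_\lambda/\Q)\simeq(\Z/3\Z)^{r(\lambda)-t(\lambda)}$ and $\Sel_{\widehat\phi}(E'_\lambda/\Q)=0$ exactly, whence $\dim_{\mb F_3}\Sel_3 = r(\lambda)-t(\lambda)-1$ on the nose.

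The genuine gap is in parts (2) and (3). Your proposed mechanism --- that the hypothesis $p\equiv 8\bmod 9$ ``forces additional $\widehat\phi$-Selmer classes to survive globally,'' to be verified by tracking cubic reciprocity --- cannot close the argument. What the Satg\'e-style local analysis actually yields under those hypotheses is only a two-sided bound, $r(\lambda)\leq\dim_{\mb F_3}\Sel_3(C_\lambda/\Q)\leq r(\lambda)+1$ in case (2) and $r(\lambda)-1\leq\dim_{\mb F_3}\Sel_3(C_\lambda/\Q)\leq r(\lambda)$ in case (3) (and even here the paper must correct the tuple count in Satg\'e's Th\'eor\`eme 2.6 when $e(\lambda)=0$). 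Once all local conditions are vacuous, which of the two values occurs depends on the rank of a matrix of cubic residue symbols that is not determined by congruence conditions alone, so no amount of reciprocity bookkeeping decides it. The paper resolves the ambiguity by an input your proposal omits entirely: the $3$-parity conjecture, proved by Dokchitser--Dokchitser, which says $\dim_{\mb F_3}\Sel_3(C_\lambda/\Q)$ is even iff $\epsilon(C_\lambda/\Q)=+1$, combined with the explicit root number formula $\epsilon(C_\lambda/\Q)=(-1)^{r(\lambda)+1}$ in case (2) and $(-1)^{r(\lambda)}$ in case (3) from Proposition \ref{proproot}. This parity constraint selects the upper endpoint in case (2) and the upper endpoint in case (3), giving the stated equalities; without it your argument stalls at the inequalities.
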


\begin{proof} We consider the maps
\begin{equation*}\Sel_\phi(E_\lambda/\Q) \xto{\widehat{\phi}}\Sel_3(E_\lambda/\Q)\xto{\phi}\Sel_{\widehat{\phi}}(E_\lambda'/\Q).
\end{equation*}
We apply the snake lemma to \eqref{sel} with $\varphi=\phi$ and $[3]$. Note that the kernel of the map $E_\lambda'(\Q)/\phi E_\lambda(\Q)\xto{\widehat{\phi}} E_\lambda(\Q)/3 E_\lambda(\Q)$ is given by $E_\lambda'[\widehat{\phi}](\Q)/\phi E_\lambda[3](\Q)$ while the map $\Sha(E_\lambda/\Q)[\phi]\to \Sha(E_\lambda/\Q)[3]$ is injective. Thus, we see that $\ker\widehat{\phi}|_{\Sel_\phi(E_\lambda/\Q)}\simeq E_\lambda'[\widehat{\phi}](\Q)/\phi E_\lambda[3](\Q)$. Similarly, applying the snake lemma to \eqref{sel} with $\varphi=[3]$ and $\widehat{\phi}$ gives the cokernel $\Sel_{\widehat{\phi}}(E_\lambda'/\Q)/\phi\Sel_3(E_\lambda/\Q)\simeq \Sha(E_\lambda'/\Q)[\widehat{\phi}]/\phi \Sha(E_\lambda/\Q)[3]$. It follows (see also \cite[Lemma 5.1]{HYS}) that
\begin{equation}\label{sel3}\#\left(\Sel_3(E_\lambda/\Q)\right)=\frac{\#\left(\Sel_\phi(E_\lambda/\Q)\right)\cdot \#(\Sel_{\widehat{\phi}}(E_\lambda'/\Q))}{\#\left(E_\lambda'[\widehat{\phi}](\Q)/\phi E_\lambda[3](\Q)\right)\cdot \#\left(\Sha(E_\lambda'/\Q)[\widehat{\phi}]/\phi \Sha(E_\lambda/\Q)[3]\right)}.
\end{equation}
We know that $E_\lambda(\Q)_{\mathrm{tor}}=0$, $E_\lambda'(\Q)_{\mathrm{tor}}=E_\lambda'[\widehat{\phi}]=\{(0,1,0),(0,\pm 2^2 \lambda,1)\}\simeq \Z/3\Z$. Thus $E_\lambda'[\widehat{\phi}](\Q)/\phi E_\lambda[3](\Q)\simeq \Z/3\Z$.

Now,  the result of Satg\'{e} \cite[Proposition 2.8]{satge} based on the work of Cassels \cite{cassels} gives us
$$
\dim_{\mb{F}_3}(\Sel_\phi(E_\lambda/\Q))-\dim_{\mb{F}_3}(\Sel_{\widehat{\phi}}(E_\lambda'/\Q))=r(\lambda)-t(\lambda).
$$
It follows from \eqref{sel3} that
$$\dim_{\mb{F}_3}(\#(\Sel_3(E_\lambda/\Q)))\geq r(\lambda)-t(\lambda)-1.$$
Suppose now that one of the conditions (i)--(iii) holds in Theorem \ref{3descent} (1). Then by \cite[Th\'{e}or\`{e}me 2.9 (3)]{satge}, we have $\Sel_\phi(E_\lambda/\Q)\simeq (\Z/3\Z)^{r(\lambda)-t(\lambda)}$ and $\Sel_{\widehat{\phi}}(E_\lambda'/\Q)$ is trivial. Thus we must have $\Sha(E_\lambda'/\Q)[\widehat{\phi}]/\phi \Sha(E_\lambda/\Q)[3]$ trivial and $\ord_3(\#(\Sel_3(E_\lambda/\Q)))= r(\lambda)-t(\lambda)-1.$

For (2) and (3), we note that in \cite[Th\'{e}or\`{e}me 2.6]{satge}, the $(s(\lambda)+r(\lambda)+1)$-tuple should be a $(s(\lambda)+r(\lambda))$-tuple if $e(\lambda)=0$ (see \cite[Proposition 2.3 (3)]{satge}), so where it says $\dim_{\F_3}(\Sel_\phi(E_\lambda/\Q))\geq r(\lambda)$ at the end of the statement of the theorem (in the notation of \cite[Th\'{e}or\`{e}me 2.6]{satge}, $d(S)>c$), it should read $\dim_{\F_3}(\Sel_\phi(E_\lambda/\Q))\geq r(\lambda)-1$ if $e(\lambda)=0$. Using this, we obtain
$$r(\lambda)\leq \dim_{\mb{F}_3}\left(\Sel_3(C_\lambda/\Q)\right)\leq r(\lambda)+1$$ 
in case (2), and
$$r(\lambda)-1 \leq \dim_{\mb{F}_3}\left(\Sel_3(C_\lambda/\Q)\right)\leq r(\lambda)$$ 
in case (3). By the $3$-parity conjecture, which was proven in \cite{dd}, we know that $\dim_{\mb{F}_3}\left(\Sel_3(C_\lambda/\Q)\right)$ is even if and only if the global root number satisfies $\epsilon(C_\lambda/\Q)=+1$. On the other hand, by Proposition \ref{proproot} and \cite[Propostion A.2]{yukatam} we have 
$$\epsilon(C_\lambda/\Q)=(-1)^{r(\lambda)+1}$$
in case (2) and 
$$\epsilon(C_\lambda/\Q)=(-1)^{r(\lambda)}$$
in case (3). This concludes the proof of the theorem.
\end{proof}

\begin{remark}
\begin{enumerate}
\item The condition in Theorem \ref{3descent} (1) holds, for example, when $s(\lambda)=0$ and $e(\lambda)=1$, or in the following examples which satisfy $\rank(C_\lambda/\Q)=0$.
\begin{tiny}

\begin{equation}\nonumber
\begin{array}{lcccc}
\lambda & (t(\lambda), r(\lambda),s(\lambda))&\Sha(C_\lambda/\Q)[6]\\
\hline
3^2\cdot \cdot 29 \cdot 181 & (0, 1, 1)  &    \text{ trivial}\\
3^2\cdot 7 \cdot 29 \cdot 181 & (0, 1, 2)  &    \text{ trivial}\\
3^2\cdot 7^2 \cdot 29 \cdot 181 & (0, 1, 2)  &    (\Z/2\Z)^2\\
3^2\cdot 7 \cdot 113 \cdot 181 & (0, 1, 2)   &    \text{ trivial}\\
3^2\cdot 7 \cdot 29 \cdot 71\cdot 113 & (0, 3, 1)  &  (\Z/3\Z)^2\\
3^2\cdot 7^2 \cdot 29 \cdot 71\cdot 113 & (0, 3, 1)  &  (\Z/3\Z)^2\\
3^2\cdot  29 \cdot 71\cdot 113 \cdot 181 & (0, 3, 1)  &  (\Z/3\Z)^2\\
3^2\cdot 7^2 \cdot 29 \cdot 71\cdot 113 & (0, 3, 1)  &  (\Z/3\Z)^2\\
3^2\cdot 7 \cdot 29^2 \cdot 71\cdot 113 \cdot 181 & (0, 3, 2)  &  (\Z/3\Z)^2\\
3^2\cdot 7 \cdot 29^2 \cdot 71\cdot 113^2 \cdot 181 & (0, 3, 2)   &  (\Z/3\Z)^2\\
\hline
\end{array}
\end{equation}
\end{tiny}

\item Below we list some examples where the condition in Theorem \ref{3descent} (2) holds and $\rank(C_\lambda)=0$.
\begin{tiny}

\begin{equation}\nonumber
\begin{array}{lcccc}
\lambda & (r(\lambda),s(\lambda))& \Sha(C_\lambda/\Q)[6]\\
\hline
3^2\cdot 17^2  & (1, 0)  & (\Z/3\Z)^2\\
3^2\cdot 53^2  & (1, 0)   & (\Z/3\Z)^2\\
3^2\cdot 71^2  & (1, 0)  & (\Z/3\Z)^2\\
3^2\cdot 7^2\cdot 71\cdot 181 & (1,2)  &  (\Z/3\Z)^2\\
3^2\cdot 17^2 \cdot 53 \cdot 71 & (3, 0)   & (\Z/3\Z)^4\\
3^2\cdot 17 \cdot 53 \cdot 71^2 & (3, 0)    & (\Z/3\Z)^4\\
3^2\cdot 53 \cdot 71^2 \cdot 89 & (3, 0)    & (\Z/3\Z)^4\\
3^2\cdot 71 \cdot 89 \cdot 107 & (3, 0)   & (\Z/3\Z)^4\\
3^2\cdot 17^2 \cdot 53 \cdot 71\cdot 89\cdot 107 & (5, 0)  & (\Z/2\Z)^2\oplus (\Z/3\Z)^6\\
\hline
\end{array}
\end{equation}
\end{tiny}

\end{enumerate}

\end{remark}

\section{Numerical Examples}\label{numerical}

The root number of $C_\lambda/\Q$ was computed already in, for example, \cite[Proposition A.2]{yukatam} when $(3,\lambda)=1$. The following is obtained easily by combining the results of Rohrlich \cite{roh} and V\'{a}rilly-Alvarado \cite{VA}, and is included for the convenience of the reader.

\begin{prop}\label{proproot}  Let $\lambda>0$ be any cube-free integer, and let $e(\lambda)\in \{0, 1,2\}$ be such that $\lambda=3^{e(\lambda)}D$ and $(3,D)=1$. Then the global root number $\epsilon(C_\lambda/\Q)$ of $C_\lambda/\Q$ satisfies 
$$\epsilon(C_\lambda/\Q)=(-1)^{r(\lambda)-t(\lambda)-1},$$
where $r(\lambda)$ is the number of prime divisors of $\lambda$ which congruent to $2$ modulo $3$, $t(\lambda)=1$ if $\lambda\equiv \pm 1\bmod 9$, $t(\lambda)=1$ if $e(\lambda)=1$ and $t(\lambda)=0$ otherwise.
\end{prop}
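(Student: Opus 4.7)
The strategy is to factor the global root number as a product of local root numbers
$$\epsilon(C_\lambda/\Q) = \epsilon_\infty(C_\lambda) \cdot \prod_p \epsilon_p(C_\lambda),$$
and to evaluate each factor. Since $C_\lambda$ has CM by $\O$, Rohrlich's explicit formulas \cite{roh} for root numbers of Hecke characters of imaginary quadratic fields apply.

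At the archimedean place $\epsilon_\infty(C_\lambda)=-1$, and at primes $p$ of good reduction (those not dividing $3D$) $\epsilon_p(C_\lambda)=+1$. The bad primes are therefore $3$ together with the rational primes dividing $D$. For a prime $p\mid D$ with $p\neq 3$, Rohrlich's formula gives $\epsilon_p=-1$ when $p\equiv 2\bmod 3$ (i.e.\ $p$ is inert in $K$) and $\epsilon_p=+1$ when $p\equiv 1\bmod 3$ (i.e.\ $p$ is split in $K$); in the case $(3,\lambda)=1$ this computation was carried out in detail in \cite[Proposition A.2]{yukatam}, and the relevant local data at $p\neq 3$ are unaffected when $3\mid \lambda$. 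Multiplying these contributions with the archimedean factor yields an overall sign $(-1)^{r(\lambda)+1}$.

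It therefore suffices to compute $\epsilon_3(C_\lambda)$ as a function of $e(\lambda)$ and, when $e(\lambda)=0$, of $\lambda\bmod 9$. For this I would invoke V\'arilly-Alvarado's explicit local calculation \cite{VA}, which treats the ramified prime $3$ for the family $C_\lambda$ directly. The output is $\epsilon_3=-1$ when $\lambda\equiv \pm 1\bmod 9$, $\epsilon_3=-1$ when $e(\lambda)=1$, and $\epsilon_3=+1$ otherwise; these three cases match the three cases in the definition of $t(\lambda)$, giving $\epsilon_3=(-1)^{t(\lambda)}$. Combined with the factor $(-1)^{r(\lambda)+1}$ from the previous step, this yields $\epsilon(C_\lambda/\Q)=(-1)^{r(\lambda)-t(\lambda)-1}$ as required. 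As consistency checks one may verify the specific identities $\epsilon(C_{3p}/\Q)=\bigl(\frac{-3}{p}\bigr)$ and $\epsilon(C_{9p}/\Q)=-\bigl(\frac{-3}{p}\bigr)$ used in the proof of Theorem~\ref{main3}.

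The main obstacle is the case analysis at $p=3$: because $3$ is the only ramified prime of $K/\Q$ and always divides the conductor of $\psi_\lambda$ when $e(\lambda)\geq 1$, its local root number depends delicately on the reduction type of $C_\lambda$ at $3$, which varies with $e(\lambda)$ and with $\lambda\bmod 9$. Once V\'arilly-Alvarado's contributions are tabulated and matched against the definition of $t(\lambda)$, the remainder of the proof reduces to a straightforward multiplication of local signs.
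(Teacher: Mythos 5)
Your proposal is correct and follows essentially the same route as the paper: decompose $\epsilon(C_\lambda/\Q)$ into local root numbers, take $\epsilon_\infty=-1$, get $\epsilon_p=\left(\frac{-3}{p}\right)$ for $p\mid D$ from Rohrlich's formula, and read off $\epsilon_3$ (namely $-1$ if $\lambda\equiv\pm1\bmod 9$ or $e(\lambda)=1$, and $+1$ otherwise) from V\'arilly-Alvarado's explicit computation, exactly as in the paper's proof. The only cosmetic difference is at the prime $2$, where the paper cites \cite[Lemma 4.1]{VA} instead of Rohrlich (whose Proposition 2 is stated for odd primes), a gap your appeal to \cite[Proposition A.2]{yukatam} already covers.
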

\begin{proof} Given a prime number $q$, let $\epsilon_q(C_\lambda/\Q)$ denote the local root number of $C_\lambda/\Q$ at $q$. Noting $-2^4D^2\equiv 2\bmod 3$ and applying \cite[Lemma 4.1]{VA}, we have $\epsilon_3(C_\lambda/\Q)=-1$ if $\lambda\equiv \pm 1\bmod 9$ or $e(\lambda)=1$ and $\epsilon_3(C_\lambda/\Q)=+1$ if $e(\lambda)=2$. Furthermore, we have $\epsilon_\infty(C_\lambda/\Q)=-1$ and for each prime divisor $p$ of $D$, we have $\epsilon_p(C_\lambda/\Q)=\left(\frac{-3}{p}\right)$ (see \cite[Lemma 4.1]{VA} for $p=2$ and \cite[Proposition 2]{roh} for odd $p$). The result now follows from the fact that the global root number is given as the product of local root numbers.
\end{proof}

The numerical examples below are listed in the increasing order of $r(\lambda)-e(\lambda)+1$ which is a lower bound for $\Sha(C_{\lambda})[3]$, then $n(\lambda)$, $s(\lambda)$ and $r(\lambda)$.\\
\vspace{-10pt}

\begin{tiny}

\begin{equation}\nonumber
\begin{array}{lcccc}
\lambda & (e(\lambda), r(\lambda),s(\lambda)) &  r(\lambda)-e(\lambda)+1 & L^{(\mathrm{\mathrm{alg}})}(C_\lambda,1)& \Sha(C_\lambda)\\
\hline
3 & (1,0,0) & 0 & 1  & \text{ trivial}\\ 
3^2 \cdot 2 & (2,1,0) & 0 & 1 & \text{ trivial}\\
3^2 \cdot 2^2 & (2,1,0) & 0 & 1 & \text{ trivial}\\
3^2 \cdot 5 & (2,1,0) & 0 & 1  & \text{ trivial}\\ 
3^2 \cdot 5^2 & (2,1,0) & 0 & 1 & \text{ trivial}\\ 
3 \cdot 7& (1,0,1) &  0& 3  & \text{ trivial}\\ 
3 \cdot 7^2& (1,0,1) &  0& 3  & \text{ trivial}\\ 
3 \cdot 13& (1,0,1) &  0& 3  & \text{ trivial}\\ 
3\cdot 31 & (1,0,1) & 0 & 3   &  \text{ trivial}\\ 
3^2\cdot 2\cdot 7^2 & (2,1,1) & 0 & 3 &  \text{ trivial}\\
3^2\cdot 2^2\cdot 7^2 & (2, 1, 1) & 0  & 3 &\text{ trivial}\\  
3^2 \cdot 5^2 \cdot 7^2  & (2,1,1) & 0 & 2^2\cdot 3  & (\Z/2\Z)^2\\ 
3^2\cdot 5\cdot 13^2 & (2,1,1) & 0 &   2^2\cdot 3 &  (\Z/2\Z)^2\\ 
3^2\cdot 7\cdot 11 & (2,1,1) & 0 & 3 &  \text{ trivial}\\ 
3^2\cdot 2\cdot 7^2\cdot 13 & (2,1,2) & 0 & 3^2 & \text{ trivial}\\ 
3^2\cdot 2\cdot 7^2\cdot 13^2 & (2,1,2) & 0 & 3^2 & \text{ trivial}\\ 
3^2\cdot 2^2\cdot  7^2\cdot 13^2  & (2,1, 2) & 0  & 2^2\cdot 3^2 &  (\Z/2\Z)^2\\  
3\cdot 7\cdot 13 \cdot 19^2 & (1,0,3) & 0 & 3^3 &  \text{ trivial}\\
3^2\cdot 2\cdot 7\cdot 13\cdot 19^2 & (2,1,3) & 0 & 3^3 & \text{ trivial}\\
3^2\cdot 2\cdot 7^2\cdot 13^2 \cdot 19 & (2,1,3) & 0 & 2^2\cdot 3^3 & (\Z/2\Z)^2\\   
3^2\cdot 2^2\cdot 7^2\cdot 13^2 \cdot 19 & (2,1,3) & 0 & 3^3 & (\Z/2\Z)^2\\   
3^2\cdot 7\cdot 11\cdot 13 \cdot 19 & (2,1,3) & 0 & 3^3 &  \text{ trivial}\\
3^2\cdot 7\cdot 13\cdot 19 \cdot 29 & (2,1,3) & 0 & 3^3 & \text{ trivial}\\
3\cdot 7\cdot 13^2\cdot 19\cdot 31 & (1,0,4) & 0 & 3^4 & \text{ trivial}\\ 
3\cdot 2\cdot5^2 & (1, 2, 0) &2  & 3^2& (\Z/3\Z)^2\\  
3\cdot 2^2\cdot5^2 & (1, 2, 0) &2  & 3^2& (\Z/3\Z)^2\\  
3\cdot 2 \cdot 11 & (1, 2, 0) &2  & 3^2& (\Z/3\Z)^2\\  
3\cdot 2 \cdot 11^2 & (1, 2, 0) &2  & 3^2& (\Z/3\Z)^2\\  
3\cdot 2^2 \cdot 11^2 & (1, 2, 0) &2  & 3^2& (\Z/3\Z)^2\\  
3\cdot 5\cdot 11 & (1,2,0) & 2 & 3^2 &  (\Z/3\Z)^2\\  
3\cdot 17\cdot 29 & (1,2,0) &  2 &   3^2 &  (\Z/3\Z)^2\\ 
3^2 \cdot 2 \cdot 5 \cdot 11 & (2,3,0) & 2 & 3^2&(\Z/3\Z)^2\\  
3^2 \cdot 2^2 \cdot 5^2 \cdot 11^2 & (2,3,0) & 2 & 3^2&(\Z/3\Z)^2\\  
3^2\cdot 5^2 \cdot 11\cdot 15 & (2,3,0)&2 & 3^2 & (\Z/3\Z)^2\\ 
3\cdot 5\cdot 7\cdot 11\cdot 13 & (1,2,2) & 2 & 3^4  &  (\Z/3\Z)^2\\ 
3^2\cdot 2^2\cdot 7^2\cdot 11^2\cdot 13^2 & (2,3,2) & 2  &  3^4& (\Z/3\Z)^2\\ 
3\cdot 2\cdot 5\cdot 11\cdot 17 & (1,4,0) & 4 & 3^4 &  (\Z/3\Z)^4\\ 
3\cdot 2\cdot 5\cdot 11^2 \cdot 17 & (1,4,0) & 4 & 3^4 &  (\Z/3\Z)^4\\ 
3\cdot 2\cdot 5\cdot 11 \cdot 17^2 & (1,4,0) & 4 & 3^4 &  (\Z/3\Z)^4\\ 
3\cdot 5\cdot 11\cdot 17\cdot 23 & (1,4,0) & 4& 3^4  &  (\Z/3\Z)^4\\ 
3^2\cdot 2\cdot 5\cdot 11\cdot 17\cdot 23 & (2, 5,0) & 4  & 3^4 & (\Z/3\Z)^4 \\
3^2\cdot 2\cdot 5\cdot 11^2 \cdot 17\cdot 23 & (2, 5,0) & 4  & 3^4 & (\Z/3\Z)^4 \\
3^2\cdot 2\cdot 5\cdot 11 \cdot 29\cdot 59 & (2, 5,0) & 4  & 3^4 & (\Z/3\Z)^4 \\
3\cdot 2\cdot 5\cdot 7^2\cdot 11\cdot 17 & (1,4,1) & 4 & 3^5 & (\Z/3\Z)^4 \\
3\cdot 2\cdot 5\cdot 11\cdot 13\cdot 17 & (1,4,1) & 4  & 3^5 &  (\Z/3\Z)^4\\ 
3\cdot 2^2\cdot 5\cdot 13\cdot 17\cdot 23 & (1,4,1) & 4  & 3^5 &  (\Z/3\Z)^4\\ 
3\cdot 2^2\cdot 5\cdot 11\cdot 17\cdot 31 & (1,4,1) & 4  & 3^5 &  (\Z/3\Z)^4\\ 
3^2\cdot 2^2 \cdot 5 \cdot 7 \cdot 11 \cdot 17 \cdot 23 & (2, 5,1) & 4  & 3^5 & (\Z/3\Z)^4 \\
3\cdot 5^2 \cdot 7\cdot 11\cdot 13\cdot 17\cdot 23 & (1, 4, 2) & 4 & 3^6 & (\Z/3\Z)^4 \\
3\cdot 2\cdot 5\cdot 11^2\cdot 17\cdot 23\cdot 29 & (1, 6, 0) & 6 & 3^6 &  (\Z/3\Z)^6 \\
3\cdot 2\cdot 5\cdot 11\cdot 17^2\cdot 23\cdot 29 & (1, 6, 0) & 6 & 3^6 &  (\Z/3\Z)^6 \\
3\cdot 2\cdot 5\cdot 11\cdot 17^2\cdot 23\cdot 41 & (1, 6, 0) & 6 & 3^6 &  (\Z/3\Z)^6 \\

\hline

\end{array}
\end{equation}
\end{tiny}

 \section*{Acknowledgement}
I would like to thank John Coates, Yongxiong Li and Don Zagier for their interest, comments and encouragement. I would also like to thank the referee for carefully reading the manuscript and for the helpful comments. I am grateful to the Max Planck Institute for Mathematics in Bonn for its support and hospitality. This project was partially supported by the European Union’s Horizon 2020 research and innovation programme under the Marie Sk\l{}odowska-Curie grant agreement No.~101026826.

\bibliographystyle{amsalpha}

\newcommand{\etalchar}[1]{$^{#1}$}
\providecommand{\bysame}{\leavevmode\hbox to3em{\hrulefill}\thinspace}
\providecommand{\MR}{\relax\ifhmode\unskip\space\fi MR }

\providecommand{\MRhref}[2]{
  \href{http://www.ams.org/mathscinet-getitem?mr=#1}{#2}
}
\providecommand{\href}[2]{#2}

\end{document}